\documentclass[a4paper, oneside, 11pt]{article}
\usepackage[english]{babel}
\usepackage{latexsym,amsfonts,amsmath,enumerate,graphics,enumerate,amsthm}
\title{\textbf{Bands in partially ordered vector spaces with order unit} }
\author{Anke Kalauch, Bas Lemmens and Onno van Gaans}
\date{}

\let\epsilon=\varepsilon
\newtheorem{theorem}{Theorem}[]
\newtheorem{lemma}[theorem]{Lemma}
\newtheorem{corollary}[theorem]{Corollary}

\newtheorem{proposition}[theorem]{Proposition}

\newtheorem{question}[theorem]{Question}
\theoremstyle{definition}
\newtheorem{definition}[theorem]{Definition}
\newtheorem{example}[theorem]{Example}
\newtheorem{remark}[theorem]{Remark}
\newtheorem*{acknowledgement}{Acknowledgement}

\let\epsilon=\varepsilon
\let\phi=\varphi
\let\theta=\vartheta
\newcommand{\R}{\mathbb{R}}
\newcommand{\N}{\mathbb{N}}
\newcommand{\RR}{\mathbb{R}}

\newcommand{\car}{\operatorname{car}}
\newcommand{\Lbar}{\overline{\Lambda}}

\newcommand{\spn}{\mathrm{span}\,}
\newcommand{\sat}{\mathrm{sat}\,}
\newcommand{\aff}{\mathrm{aff}\,}

\begin{document}
\maketitle
\begin{abstract}
In an Archimedean directed partially ordered vector space $X$, one can define the concept of a band in terms of disjointness. Bands can be studied by using a vector lattice cover $Y$ of $X$. If $X$ has an order unit, $Y$ can be represented as $C(\Omega)$, where $\Omega$ is a compact Hausdorff space.  
We characterize bands in $X$, and their disjoint complements, in terms of subsets of $\Omega$. We also  analyze  two methods to extend bands in $X$ to $C(\Omega)$  and show how the carriers of a band and its extensions are related.

We use the results to show that in each $n$-dimensional partially ordered vector space with a closed generating cone, the number of bands is bounded by $\frac{1}{4}2^{2^n}$ for $n\geq 2$.  We also construct examples of $(n+1)$-dimensional partially ordered vector spaces with ${2n\choose n}+2$ bands. This shows that there are $n$-dimensional partially ordered vector spaces that have more bands than an $n$-dimensional Archimedean vector lattice when $n\geq 4$. 
\end{abstract}

\section{Introduction}\label{sec1}

Bands play an important role in the theory of vector lattices.
Together with concepts such as ideals and disjointness they are 
widely used to study the structure of vector lattices.
Bands and disjointness are closely related, as in Archimedean vector lattices bands are exactly those subspaces that coincide with their double disjoint complements. 
In the study of operators on vector lattices there is a particular interest in operators preserving certain structures of the lattice. Especially the study of band preserving and disjointness preserving operators is an active field of research, see e.g. \cite{abrkit2000,abrkit2002,bou2008,randrian2005}.

If one considers vector lattices, one naturally encounters ordered spaces that are not lattices. For instance, linear subspaces of vector lattices
and spaces of operators between vector lattices are partially ordered vector spaces, but not lattices, in general. It is therefore of interest to extend the theory of vector lattices and operators on them
to partially ordered vector spaces.  

The notion of a band is defined in partially ordered vector spaces by means of disjointness in \cite{GaKa2006}. Fundamental properties of bands in partially ordered vector spaces are shown in \cite{gaanskalauch} with the aid of vector lattice covers. In \cite{KalLemGaa2010} a functional representation method provides a vector lattice cover of an Archimedean partially ordered vector space $X$ with unit by embedding $X$ into a space of continuous functions $C(\Omega)$, where $\Omega$ is a compact Hausdorff space. This raises the question whether the bands in $X$ can be identified in terms of subsets of $\Omega$. 
In Section \ref{sec2} we establish a one-to-one correspondence between the bands in $X$ and so-called `bisaturated' subsets of $\Omega$. 

It has been shown in \cite{gaanskalauch} that bands in $X$ can be extended to bands in a vector lattice cover of $X$. In Section \ref{sec3} we consider two different extension methods using disjoint complements, for which we establish the relations between the `carrier' of a band in $X$ and the carrier of the extension of the band in $C(\Omega)$.

In Section \ref{sec4} we use the results of Section \ref{sec2} to investigate bands in finite dimensional spaces. We show that the results  provide a method to list all bands in spaces with polyhedral cones. Further, we prove that the number of bands in a space with a polyhedral cone is bounded by a constant only depending on the dimension of the space. By  a plastering approximation we extend this result to arbitrary generating closed cones. 

In the remainder of this section we fix our notation.


Let $X$ be a real vector space and let $K$ be a \emph{cone} in $X$, i.e., $K$ is a \emph{wedge} ($x,y\in K, \,\lambda,\mu\geq 0$ imply $\lambda
x+\mu y\in K$) and $K\cap (-K)=\{0\}$. In $X$ a partial order is
introduced by defining $y\ge x$ if and only if $y-x\in K$. In this case $X$ is called a \emph{partially ordered vector space}. The space $X$ is called \emph{directed} if $X=K-K$; in this case $K$ is called \emph{generating}.
The space $X$ is \emph{Archimedean} if for any $x,y\in X$ with $nx\leq y$ for all $n\in \mathbb{N}$ we have $x\leq 0$. 

For a subset $M$ of $X$ the set of all upper 
bounds of $M$ is denoted by $M^u$.
As in \cite[Definition 2.1]{gaans2003}, we call $M\subseteq X$  \emph{solid} if from $x\in X$, $y\in M$ and $\{x,-x\}^u\supseteq \{y,-y\}^u$ it follows that $x\in M$. A solid linear subspace of $X$ is called an \textit{ideal}. 
According to \cite{GaKa2006}, $x,y\in X$ are called \emph{disjoint}, in symbols $x\perp y$, if \[\{x+y,x-y\}^u=\{x-y,-x+y\}^u.\] 
The \emph{disjoint complement} of a subset $M$ of $X$ is the set \[M^d:=\{y\in X\colon x\perp y \mbox{ for all } x\in M\}\,.\] 
A linear subspace $B$ of $X$ is called a \emph{band} if $(B^d)^d=B$. 
If $X$ is an Archimedean vector lattice, these notions coincide with the usual ones. In a directed Archimedean partially ordered vector space $X$ 
every band is an ideal \cite[Theorem 5.14]{gaanskalauch}, 
and for a subset $B\subseteq X$ the set $B^d$ is a band \cite[Proposition 5.5 (ii)]{GaKa2006}. 
 
Throughout the exposition we assume that $X$ is an Archimedean partially ordered vector space and has an \emph{order unit} $u\in K$, 
i.e., for each $x\in X$ there exists $\alpha\geq 0 $ such that
\[
-\alpha u\leq x\leq \alpha u.
\]
Then $X$ is directed.
As usual, an order unit $u\in K$ gives rise to a norm $\left\|\cdot\right\|_u$ on $X$ by 
\[
\|x\|_u=\inf\{\alpha>0\colon -\alpha u\leq x\leq \alpha u\}.
\]
 With respect to this norm each 
positive linear functional $\phi\colon X\to\mathbb{R}$ is continuous, as 
$|\phi(x)|\leq \phi(u)$ for all $x\in X$ with $\|x\|_u\leq 1$.
In the norm dual $X'$ the set \[K':=\{\varphi\in X'\colon \varphi(K)\subseteq [0,\infty)\}\] is a cone.

Next we recall the functional representation of $X$. Denote \[\Sigma:=\{\phi\in K'\colon \phi(u)=1\}.\] 
 By the Banach-Alaoglu theorem  the closed unit ball $B'$ of $X'$ 
is w*-compact. $\Sigma$ is a w*-closed subset of $B'$, and hence w*-compact. 
 Let 
 \begin{equation*}
 \Lambda:=\{\phi\in\Sigma \colon \phi\mbox{ is an extreme point of }\Sigma\}.
 \end{equation*}
  Recall that $\phi \in \Sigma$ is \emph{extreme} if $\phi$ is not in the (relative) interior of a line-segment in $\Sigma$, and that $\Lambda\neq  \emptyset$ by the Krein-Milman theorem. In general, $\Lambda$ need not be w*-closed, not even if $X$ is finite dimensional.   
For $M\subseteq X'$ the w*-closure of $M$ 
is denoted by 
$\overline{M}$. 
Thus $\overline{\Lambda}$ is a compact Hausdorff space. 
 
In the sequel we write $\Phi\colon X\to C(\overline{\Lambda})$ to denote the linear map 
\begin{equation}\label{eq.greeks2}
(\Phi(x))(\phi)=\phi(x)\mbox{\quad for }\phi\in\overline{\Lambda}.
\end{equation}
$(\Phi,\overline{\Lambda})$ is a functional representation of $X$, which means that $\Phi$ is linear, bipositive, maps $u$ to the constant 1 function (which implies isometry with respect to the $u$-norm) and the image of $\Phi$ separates the points of $\overline{\Lambda}$. 
Recall that a linear map $\Psi\colon U\to V$, where $U$ and $V$ are partially ordered vector spaces, is called \emph{bipositive} if for every $x\in U$ one has 
$x\ge 0$ if and only if $\Psi(x)\ge 0$.

We proceed by recalling the vector lattice cover. 
A linear subspace $D$ of a partially ordered vector space $Y$ is \emph{order dense} if every 
$h\in Y$ is the greatest lower bound of the set $\{g\in D\colon\, g\ge h\}$. 
A \emph{vector lattice cover} of a partially ordered vector space $X$ is a vector lattice $Y$ for which there exists a bipositive linear map $i\colon X\to Y$ such that $i(X)$ is order dense in $Y$. If in addition there is no proper vector sublattice of $Y$ containing $i(X)$, then $(Y,i)$ is called a \emph{Riesz completion} \cite{Ha1993}.  
A Riesz completion $Y$ is unique up to isomorphism \cite[Remark 3.2]{Ha1993}, and hence we may speak of \emph{the Riesz completion} of $X$.
The partially ordered vector spaces that have a Riesz completion are called \emph{pre-Riesz} spaces.
It is shown in \cite[Theorems 1.7, 3.5, 3.7, 4.13 and Corollaries 4.9--4.11]{Ha1993} that, among other spaces, every directed Archimedean partially ordered vector space $X$ has a Riesz completion. If, in addition, there is an order unit in $X$, the Riesz completion can be obtained by means of $\Phi$.   
\begin{theorem}\emph{\cite[Theorem 10]{KalLemGaa2010}} \label{thm:main}
Let $X$ be an Archimedean partially ordered vector space with order unit $u$. Then $C(\overline{\Lambda})$ is a vector lattice cover of $X$. Moreover, the Riesz subspace of $C(\overline{\Lambda})$ generated by $\Phi(X)$ is the Riesz completion of $X$.
\end{theorem}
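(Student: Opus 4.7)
The proof amounts to verifying the defining properties of a vector lattice cover for $(\Phi,\overline{\Lambda})$: linearity and bipositivity of $\Phi$, and order density of $\Phi(X)$ in $C(\overline{\Lambda})$. Linearity and the identity $\Phi(u)=\mathbf{1}$ are immediate, the latter because $\overline{\Lambda}\subseteq\Sigma$ (as $\Sigma$ is w*-closed) and every $\phi\in\Sigma$ satisfies $\phi(u)=1$. The resulting isometry $\Phi\colon(X,\|\cdot\|_u)\to(C(\overline{\Lambda}),\|\cdot\|_\infty)$ and the point-separation of $\Phi(X)$ on $\overline{\Lambda}$ both follow from the fact that distinct functionals in $X'$ are distinguished by some element of $X$.

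For bipositivity, the one nontrivial implication is that $\phi(x)\geq 0$ for all $\phi\in\overline{\Lambda}$ forces $x\in K$. By the Krein--Milman theorem, $\Sigma=\overline{\mathrm{conv}}(\Lambda)$, so convex combinations and w*-limits propagate positivity to all of $\Sigma$. A short direct argument extends this to any $\psi\in K'$: if $\psi(u)>0$, rescale to $\Sigma$; if $\psi(u)=0$, the bound $y\leq\alpha u$ for $y\in K$ forces $\psi\equiv 0$ on $K$, hence on $X=K-K$. Since $X$ is Archimedean, $K$ is closed in $\|\cdot\|_u$, so Hahn--Banach separation produces, for any $x\notin K$, a $\psi\in K'$ with $\psi(x)<0$, a contradiction.

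The main obstacle is order density of $\Phi(X)$ in $C(\overline{\Lambda})$. I would establish it by showing that for every $f\in C(\overline{\Lambda})$, $\phi_0\in\overline{\Lambda}$, and $\varepsilon>0$, there is an upper bound of $f$ in $\Phi(X)$ whose value at $\phi_0$ is at most $f(\phi_0)+\varepsilon$. For each $\psi\in\overline{\Lambda}$ with $\psi\neq\phi_0$, exploit that $\Phi(X)$ contains constants and separates $\phi_0$ from $\psi$ to produce $y_\psi\in X$ with $\phi_0(y_\psi)=f(\phi_0)+\varepsilon$ and $\psi(y_\psi)>f(\psi)$; for $\psi=\phi_0$ take $y_{\phi_0}=(f(\phi_0)+\varepsilon)u$ and use continuity of $f$. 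The open sets $V_\psi:=\{\phi\in\overline{\Lambda}\colon(\Phi(y_\psi))(\phi)>f(\phi)\}$ cover $\overline{\Lambda}$ (each contains $\psi$), so compactness yields a finite subcover $V_{\psi_1},\ldots,V_{\psi_n}$; the pointwise maximum $g:=\max_i\Phi(y_{\psi_i})$ is continuous, satisfies $g\geq f$ on $\overline{\Lambda}$, and equals $f(\phi_0)+\varepsilon$ at $\phi_0$. This directly shows that the Riesz subspace $Y$ of $C(\overline{\Lambda})$ generated by $\Phi(X)$ is order dense in $C(\overline{\Lambda})$. To upgrade from $Y$ to $\Phi(X)$ itself, one uses that $\Phi(X)$ is order dense in $Y$ (a consequence of the general theory of pre-Riesz spaces, cf.\ \cite{Ha1993}) together with transitivity of order density for nested subspaces, which is a short verification from the definition.

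The moreover-clause then follows from the uniqueness of the Riesz completion \cite[Remark 3.2]{Ha1993}: any bipositive, order-dense embedding of $X$ into a vector lattice, whose image Riesz-generates the codomain, realizes the Riesz completion. Applied to $\Phi$, this identifies $Y$ as the Riesz completion of $X$. The chief technical ingredient throughout is the compactness-plus-separation construction for upper bounds; the remaining steps reduce to standard applications of Krein--Milman, Hahn--Banach, and the Archimedean property.
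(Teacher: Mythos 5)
This theorem is quoted from \cite[Theorem 10]{KalLemGaa2010} and not proved in the present paper, so your proposal has to be measured against the proof there. Most of it is fine: the bipositivity argument (closedness of $K$ in the $u$-norm via the Archimedean property, Hahn--Banach separation, and $\Sigma=\overline{\mathrm{conv}}(\Lambda)$) is correct, and your compactness-plus-separation construction does prove that the Riesz subspace $Y$ generated by $\Phi(X)$ is order dense in $C(\overline{\Lambda})$. The genuine gap is the final ``upgrade'' from $Y$ to $\Phi(X)$. You justify the step ``$\Phi(X)$ is order dense in $Y$'' by appeal to the general theory of pre-Riesz spaces, but no such general principle exists: a pre-Riesz space embedded bipositively into a vector lattice need not be order dense in the Riesz subspace it generates. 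Concretely, let $X$ be the space of affine functions on $[0,1]$ sitting inside $C[0,1]$ via point evaluations. This embedding is bipositive, $X$ contains the constants and separates points, so your construction applies verbatim and shows that the generated Riesz subspace (the piecewise affine functions) is order dense in $C[0,1]$; yet $X$ is not order dense there, since the infimum of the affine functions lying above $t\mapsto|t-\tfrac{1}{2}|$ is the constant $\tfrac{1}{2}$. The point is that your order-density argument never uses that $\overline{\Lambda}$ is the closure of the set of \emph{extreme} points of $\Sigma$ --- it would work for any compact subset of $\Sigma$ on which $\Phi(X)$ separates points, e.g.\ for $\Sigma$ itself --- and at that level of generality the conclusion is false. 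All of the difficulty of the theorem is concentrated in exactly the step you delegate to a nonexistent general lemma.

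What is actually needed, and what constitutes the technical core of the proof in \cite{KalLemGaa2010}, is a Choquet-boundary type statement saying that extreme states see finite suprema correctly: for $\phi\in\Lambda$ and $x_1,\dots,x_n\in X$ one has $\inf\{\phi(z)\colon z\in X,\ z\ge x_1,\dots,z\ge x_n\}=\max_i\phi(x_i)$. With this in hand, your upper bound $\max_i\Phi(y_{\psi_i})\in Y$ can be replaced by a genuine element $\Phi(z)$ with $z\ge y_{\psi_1},\dots,y_{\psi_n}$ and $\phi_0(z)$ still within $\varepsilon$ of $f(\phi_0)$, first for $\phi_0\in\Lambda$ and then for $\phi_0\in\overline{\Lambda}$ by a continuity argument; that is what yields order density of $\Phi(X)$ itself. (A secondary caution: the upward transitivity of order density that you invoke --- $X$ order dense in $Y$ and $Y$ order dense in $Z$ implies $X$ order dense in $Z$ --- is not the ``short verification from the definition''; the short direction is passing order density \emph{down} to an intermediate subspace.) Once order density of $\Phi(X)$ in $C(\overline{\Lambda})$ is established, your identification of $Y$ as the Riesz completion via the uniqueness statement of \cite[Remark 3.2]{Ha1993} is sound.
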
 

If $(Y,i)$ is a vector lattice cover of $X$, then $x\perp y$ if and only if $i(x)\perp i(y)$ \cite[Theorem 4.1]{GaKa2006}. In this setting, the extension property for bands is shown in \cite[Proposition 5.12]{gaanskalauch}, namely, if $B$ is a band in $X$, then there is a band $\widehat{B}$ in $Y$ such that $i(B)=\widehat{B}\cap i(X)$. 
To describe $B$ by means of the functional representation, we first
recall well-known results concerning ideals
and bands in $C(\Omega)$, where $\Omega$ is a compact Hausdorff space,
cf. e.g. \cite[Example 9.4]{Zaa1997}.
For a set $S\subseteq C(\Omega)$ the \emph{carrier} of $S$ is defined by \[\operatorname{car}(S)=\left\{\omega\in \Omega\colon \mbox{ there is } s\in S \mbox{ such that } s(\omega)\neq 0\right\}.\]  
The carrier of an ideal in $C(\Omega)$ is an open subset of $\Omega$.
Different ideals in $C(\Omega)$ may have the same carrier.
For an open subset $O$ of $\Omega$ the set $I_O:=\{s\in C(\Omega)\colon \forall \omega\in\Omega\setminus O \mbox{ one has } s(\omega)=0 \}$ is the largest ideal having $O$ as its carrier.
If $P$ is an open subset of $\Omega$ as well, then $I_O=I_P$ if and only if $O=P$. 

Recall that $f,g\in C(\Omega)$ are disjoint if and only if for every $\omega\in\Omega$ one has $f(\omega)=0$ or $g(\omega)=0$.
For every band $B$ in $C(\Omega)$ there is an open subset $O$ of $\Omega$ such that $B=I_O$, so a band is determined whenever its carrier is known.
On the other hand,
for an open subset $O$ of $\Omega$ the set $I_O$ is not a band, in general. An open subset $O$ of $\Omega$ is called \textit{regularly open} if it equals the interior of its closure, i.e. $O=\operatorname{int}\left(\overline{O}\right)$. 
Bands in $C(\Omega)$ are characterized as follows (see for instance \cite[12.7--12.9]{JonRoo1977} or \cite[Example 9.4]{Zaa1997}). 
\begin{proposition} \label{pro2}
For an open subset $O$ of $\Omega$ the ideal $I_O$ is a band if and only if $O$ is regularly open. 
\end{proposition}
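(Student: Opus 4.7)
The plan is to compute the double disjoint complement $(I_O)^{dd}$ explicitly as $I_P$ for some open set $P$ expressed in terms of $O$, and then to use the fact that $I_O = I_P$ for open sets $O,P$ forces $O = P$ (which is stated in the text just before the proposition).

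First I would determine $(I_O)^d$ and show that it equals $I_{\Omega \setminus \overline{O}}$. For the inclusion ``$\subseteq$'', take $g \in (I_O)^d$ and $\omega \in O$. Since $\Omega$ is compact Hausdorff and $O$ is open, by Urysohn's lemma I can choose $f \in C(\Omega)$ with $f(\omega) = 1$ and $f$ vanishing on $\Omega \setminus O$, so that $f \in I_O$. Disjointness of $f$ and $g$ forces $g(\omega) = 0$, hence $g \equiv 0$ on $O$, and by continuity $g \equiv 0$ on $\overline{O}$. For the reverse inclusion, if $g$ vanishes on $\overline{O}$, then for any $f \in I_O$ and any $\omega \in \Omega$, either $\omega \in \overline{O}$ (so $g(\omega) = 0$) or $\omega \notin \overline{O} \subseteq \Omega \setminus O$... wait, actually the split is simpler: either $\omega \in O$, in which case $g(\omega)=0$, or $\omega \notin O$, in which case $f(\omega)=0$. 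Either way $f \perp g$, so $g \in (I_O)^d$. Noting that $\{g : g \equiv 0 \text{ on } \overline{O}\} = I_{\Omega \setminus \overline{O}}$, this gives the identification.

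Second, I apply the same formula to the open set $O' := \Omega \setminus \overline{O}$, obtaining
\[
(I_O)^{dd} = (I_{O'})^d = I_{\Omega \setminus \overline{O'}} = I_{\operatorname{int}(\overline{O})},
\]
where in the last step I use the standard topological identity $\Omega \setminus \overline{\Omega \setminus \overline{O}} = \operatorname{int}(\overline{O})$.

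Finally I combine: $I_O$ is a band iff $(I_O)^{dd} = I_O$, i.e.\ $I_{\operatorname{int}(\overline{O})} = I_O$. Since both $\operatorname{int}(\overline{O})$ and $O$ are open, the injectivity $O \mapsto I_O$ recalled just before the proposition gives $\operatorname{int}(\overline{O}) = O$, which is exactly the definition of $O$ being regularly open.

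The only genuinely nontrivial step is the Urysohn construction producing, for each $\omega \in O$, a function in $I_O$ nonzero at $\omega$; everything else is bookkeeping with complements and closures. This step uses normality of $\Omega$ (compact Hausdorff suffices), which is a standing assumption on $\Omega$.
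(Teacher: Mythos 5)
Your proof is correct and complete. The paper itself gives no proof of this proposition, delegating it to \cite[12.7--12.9]{JonRoo1977} and \cite[Example 9.4]{Zaa1997}; your computation of $(I_O)^d=I_{\Omega\setminus\overline{O}}$ via Urysohn's lemma, the resulting identity $(I_O)^{dd}=I_{\operatorname{int}(\overline{O})}$, and the appeal to the injectivity of $O\mapsto I_O$ on open sets together constitute exactly the standard argument found in those references.
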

We combine properties of disjointness in vector lattice covers and in $C(\Omega)$. If $X$ is an Archimedean partially ordered vector space with unit and $\Phi$ is its embedding into $C(\overline{\Lambda})$ as given above, then Theorem \ref{thm:main} yields that for $x,y\in X$ 
\begin{eqnarray*}
x\perp y \mbox{ in } X&\Longleftrightarrow & \Phi(x)\perp \Phi(y)\mbox{ in } C(\overline{\Lambda})\\ &\Longleftrightarrow & \mbox{for each } \varphi\in\overline{\Lambda} \mbox{ one has } \varphi(x)=0 \mbox{ or } \varphi(y)=0.
\end{eqnarray*}
Moreover, if $B$ is a band in $X$, then there exists a regularly open set $O\subseteq \overline{\Lambda}$ such that 
\[B=\{x\in X\colon    \varphi(x)=0 \mbox{ for all }\varphi\in\overline{\Lambda}\setminus O\}.\]

\section{Characterization of bands}\label{sec2}

We consider an Archimedean partially ordered vector space $X$ with order unit and its embedding $\Phi$ into $C(\Lbar)$ as given in Section \ref{sec1}. For $B\subseteq X$ we will deal with $\operatorname{car}(\Phi(B))$, which we abbreviate by $\operatorname{car}(B)$.

\begin{remark}
If $X=C(\Omega)$, where $\Omega$ is a compact Hausdorff space, then $\Lambda=\overline{\Lambda}$ and $\Omega$ are homeomorphic, such that the carrier of $B\subseteq C(\Omega)$ defined as a subset of $\overline{\Lambda}$ corresponds to the carrier of $B$ seen as a subset of $\Omega$. In that sense the definition of the carrier in $X$ is compatible with the definition of the carrier in Section 1.
\end{remark}
We denote 
\[N(B):=\overline{\Lambda}\setminus \car(B)=\{\varphi\in\overline{\Lambda}\colon\, \varphi(b)=0\mbox{ for all }b\in B\}.\]
For a set $M\subseteq \Lbar$ we also use the notation $M^c:=\overline{\Lambda}\setminus M$. $\spn(M)$ denotes the linear subspace of $X'$ spanned by $M$ and the affine hull of $M$
is given by \[\operatorname{aff}(M)=\left\{\sum_{i=1}^n\alpha_i m_i\colon n\in\N,\, \alpha_i\in\R,\, \sum_{i=1}^n\alpha_i=1,\, m_i\in M \right\}.\]
For $M\subseteq \Lbar$ denote the \emph{zero set induced by $M$} by \[Z(M)=\{x\in X\colon \varphi(x)=0\mbox{ for all } \varphi\in M \}.\] 
In the present section, we will characterize those sets $M\subseteq \Lbar$
that belong to a band $B$ in $X$ in the sense that $B=Z(M)$ and $M=N(B)$.
We need the following notion.

\begin{definition} 
For $M\subseteq \Lbar$ we define the \emph{saturation} of $M$ by 
\[\sat (M):=\Lbar\cap\overline{\spn (M)}.\]
A set $M\subseteq \Lbar$ is called \emph{saturated} if $M=\sat (M)$.
\end{definition}
Instead of the linear span  one could also use the affine hull in the definition of saturation, see Corollary \ref{cor6} below.

It is straightforward that for $B\subseteq X$ the set $N(B)$ is saturated.  In the subsequent proposition we characterize the saturation by means of zero sets. 
Let us first state two properties.
\begin{lemma}\label{twobasicprops}
Let $M\subseteq \overline{\Lambda}$. Then the following holds:
\begin{itemize}
\item[(i)] $\sat(\sat (M))=\sat (M)$;
\item[(ii)] $Z(\sat (M))=Z(M)$.
\end{itemize}
\end{lemma}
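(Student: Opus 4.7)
The plan is to handle both parts using the fact that $\overline{\operatorname{span}(M)}$ is, by construction, a w*-closed linear subspace of $X'$, together with the basic observation that every $x\in X$ defines a w*-continuous linear functional $\hat{x}\colon \varphi\mapsto\varphi(x)$ on $X'$.

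For part (ii), I would start from the easy inclusion: since $M\subseteq \operatorname{sat}(M)$, any $x$ that vanishes on $\operatorname{sat}(M)$ certainly vanishes on $M$, giving $Z(\operatorname{sat}(M))\subseteq Z(M)$. For the reverse inclusion, fix $x\in Z(M)$ and consider the set $H_x:=\{\varphi\in X'\colon \varphi(x)=0\}$. Because $\hat{x}$ is w*-continuous, $H_x$ is a w*-closed linear subspace of $X'$. Since $M\subseteq H_x$, it follows that $\operatorname{span}(M)\subseteq H_x$ and hence $\overline{\operatorname{span}(M)}\subseteq H_x$; intersecting with $\overline{\Lambda}$ yields $\operatorname{sat}(M)\subseteq H_x$, i.e.\ $x\in Z(\operatorname{sat}(M))$.

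For part (i), the key is that $\overline{\operatorname{span}(M)}$ is already a w*-closed linear subspace of $X'$. From $\operatorname{sat}(M)\subseteq \overline{\operatorname{span}(M)}$ one gets $\operatorname{span}(\operatorname{sat}(M))\subseteq \overline{\operatorname{span}(M)}$, and taking the w*-closure preserves this inclusion; the reverse inclusion $\overline{\operatorname{span}(M)}\subseteq \overline{\operatorname{span}(\operatorname{sat}(M))}$ comes directly from $M\subseteq \operatorname{sat}(M)$. Hence $\overline{\operatorname{span}(\operatorname{sat}(M))}=\overline{\operatorname{span}(M)}$, and intersecting with $\overline{\Lambda}$ gives $\operatorname{sat}(\operatorname{sat}(M))=\operatorname{sat}(M)$.

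Neither step poses a real obstacle; the only point deserving care is to notice that the weak-* topology on $X'$ is designed precisely so that point evaluations $\hat{x}$ are continuous, which is what legitimizes passing from the condition $\varphi(x)=0$ on $M$ to the same condition on $\overline{\operatorname{span}(M)}$ in part (ii).
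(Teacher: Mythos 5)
Your proof is correct and follows essentially the same route as the paper: both parts rest on the observation that $\overline{\spn(M)}$ is a w*-closed linear subspace and that each point evaluation $\varphi\mapsto\varphi(x)$ is w*-continuous, so vanishing on $M$ propagates to $\overline{\spn(M)}$. The paper's proof is just a more compressed version of the same argument.
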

\begin{proof}
(i) Clearly, $M\subseteq \sat (M)$ and if $N\subseteq\overline{\Lambda}$ is such that $M\subseteq N$ then $\sat (M)\subseteq\sat (N)$. Since $\sat (M)\subseteq \overline{\spn (M)}$ it follows that 
\[\overline{\spn (\sat (M))}\subseteq \overline{\spn (M)},\] 
so that $\sat(\sat (M))\subseteq \sat (M)$. Hence $\sat(\sat (M))=\sat (M)$. 

(ii) $\sat (M)\supseteq M$, so $Z(\sat (M))\subseteq Z(M)$. If $x\in Z(M)$, then $\varphi(x)=0$ for all $\varphi\in M$, so $\varphi(x)=0$ for all $\varphi\in \overline{\spn (M)}$. Hence $x\in Z(\sat (M))$.
\end{proof}

The definitions of $N(B)$ and $Z(M)$, where $B\subseteq X$ and $M\subseteq\overline{\Lambda}$, are closely related to the notion of annihilator. Indeed, $N(B)=\overline{\Lambda}\cap B^\perp$ and $Z(M)={}^\perp M$, where $B^\perp$ denotes the annihilator of $B$ in $X'$ and ${}^\perp M$ the annihilator of $M$ in $X$ (see \cite[Definition 1.10.14]{megginson}). The next proposition is a variant of \cite[Proposition 2.6.6(b)]{megginson}.

\begin{proposition} \label{charsatset}
Let $M\subseteq \Lbar$. Then
\[\sat (M)=N(Z(M)).
\]
\end{proposition}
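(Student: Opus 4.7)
The proposition is essentially the annihilator-of-preannihilator formula from the duality of $X$ and $(X', w^*)$, restricted to $\overline{\Lambda}$. Using the annihilator notation recalled right before the statement, we have $Z(M) = {}^\perp M$ and $N(Z(M)) = \overline{\Lambda} \cap ({}^\perp M)^\perp$, and the standard Hahn--Banach duality tells us that $({}^\perp M)^\perp$ equals the $w^*$-closed linear span of $M$. So the plan is to spell this out from scratch by proving both inclusions.

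For $\operatorname{sat}(M) \subseteq N(Z(M))$, I would take $\varphi \in \operatorname{sat}(M) \subseteq \overline{\operatorname{span}(M)}$ (all closures in $X'$ are $w^*$-closures) and fix an arbitrary $x \in Z(M)$. Every $\psi \in M$ kills $x$, so by linearity every $\psi \in \operatorname{span}(M)$ kills $x$; since evaluation at $x$ is a $w^*$-continuous linear functional on $X'$, this extends to every $\psi$ in $\overline{\operatorname{span}(M)}$, including $\varphi$. Hence $\varphi \in N(Z(M))$.

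For the reverse inclusion I would argue by contrapositive. Suppose $\varphi \in \overline{\Lambda}$ but $\varphi \notin \operatorname{sat}(M)$, i.e.\ $\varphi \notin \overline{\operatorname{span}(M)}$. The set $V := \overline{\operatorname{span}(M)}$ is a $w^*$-closed linear subspace of $X'$ and $\{\varphi\}$ is a $w^*$-compact convex set disjoint from $V$, so by the Hahn--Banach separation theorem in the locally convex space $(X', w^*)$ there is a $w^*$-continuous linear functional on $X'$, i.e.\ an element $x \in X$, together with $\alpha \in \mathbb{R}$ such that $\psi(x) \leq \alpha < \varphi(x)$ for all $\psi \in V$. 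Since $V$ is a subspace, $\psi(x) = 0$ for all $\psi \in V$, and in particular $x \in Z(M)$, while $\varphi(x) > 0$. This shows $\varphi \notin N(Z(M))$.

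The only real point to check carefully is that the functional produced by Hahn--Banach is indeed of the form $\psi \mapsto \psi(x)$ for some $x \in X$, which is exactly the characterization of $w^*$-continuous linear functionals on $X'$; everything else is routine. I also note that the intersection with $\overline{\Lambda}$ on both sides causes no trouble: on the left it is in the definition of $\operatorname{sat}$, and on the right it is built into $N(\,\cdot\,) \subseteq \overline{\Lambda}$. Lemma \ref{twobasicprops}(ii) can be quoted if convenient but is not strictly needed.
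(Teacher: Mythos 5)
Your proof is correct and follows essentially the same route as the paper's: the forward inclusion via $w^*$-continuity of evaluation at each $x\in Z(M)$, and the reverse inclusion via Hahn--Banach separation of $\psi$ from the $w^*$-closed subspace $\overline{\spn(M)}$ in $(X',w^*)$, using that the separating functional is given by some $x\in X$. The extra care you take in justifying that the separating functional is $w^*$-continuous, hence of the form $\psi\mapsto\psi(x)$, is exactly the point the paper leaves implicit.
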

\begin{proof}
If $\varphi\in \Lbar\cap\spn (M)$ and $x\in Z(M)$, then $\varphi(x)=0$,
so 
\begin{eqnarray*}
\Lbar\cap\spn (M)&\subseteq&\{\varphi\in\Lbar\colon \varphi(x)=0\mbox{ for all } x\in Z(M) \}\\
&=&\bigcap_{x\in Z(M)}\{\varphi\in\Lbar\colon \varphi(x)=0\}.
\end{eqnarray*}
Since the right hand side of the equality is a w*-closed set, it follows that 
\[\Lbar\cap\overline{\spn (M)} \subseteq \{\varphi\in\Lbar\colon \varphi(x)=0\mbox{ for all } x\in Z(M) \}.\]
Vice versa, let 
\[\psi\in \Lbar\setminus \overline{\spn (M)}.\]
By the Hahn-Banach theorem, 
there is $x\in X$ such that $\psi(x)\neq 0$ and for all $\varphi\in \overline{\spn (M)}$ one has $\varphi(x)=0$.
In particular, $x\in Z(M)$.
So, 
\[\psi\not\in \{\varphi\in\Lbar\colon \varphi(x)=0\mbox{ for all } x\in Z(M) \}.\]
\end{proof}
It can be observed that the saturation of $M\subseteq\overline{\Lambda}$ is the largest subset $N$ of $\overline{\Lambda}$ such that $Z(N)=Z(M)$. 

In the considerations below it appears more natural to use the affine hull instead of the linear span in describing saturations. It is a consequence of Proposition \ref{charsatset} that the same sets are obtained.
\begin{corollary}\label{cor6}
Let $M\subseteq\Lbar$. Then $\sat(M)=\Lbar\cap \overline{\aff(M)}$. 
\end{corollary}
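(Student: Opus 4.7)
The plan is to prove the two inclusions in $\operatorname{sat}(M)=\overline{\Lambda}\cap\overline{\operatorname{aff}(M)}$ separately. The inclusion $\overline{\Lambda}\cap\overline{\operatorname{aff}(M)}\subseteq\operatorname{sat}(M)$ is immediate, since $\operatorname{aff}(M)\subseteq\operatorname{span}(M)$ and hence $\overline{\operatorname{aff}(M)}\subseteq\overline{\operatorname{span}(M)}$ in the w*-topology. All the content lies in the reverse inclusion, and the natural route is via Proposition \ref{charsatset}, which already identifies $\operatorname{sat}(M)$ with $N(Z(M))$.

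For the reverse inclusion, let $\psi\in\operatorname{sat}(M)$ and assume for contradiction that $\psi\notin\overline{\operatorname{aff}(M)}$. Since $\overline{\operatorname{aff}(M)}$ is a w*-closed affine subspace of $X'$ and $\{\psi\}$ is a w*-compact convex set disjoint from it, the Hahn--Banach separation theorem (in its version separating a point from a closed affine subspace in a locally convex space) supplies a w*-continuous linear functional on $X'$, i.e.\ evaluation at some $x\in X$, together with a constant $c\in\mathbb{R}$, such that
\[
\varphi(x)=c\quad\text{for every }\varphi\in\overline{\operatorname{aff}(M)},\qquad\psi(x)\neq c.
\]

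Now I exploit the crucial feature of $\overline{\Lambda}\subseteq\Sigma$: every $\varphi\in\overline{\Lambda}$ satisfies $\varphi(u)=1$. Setting $y:=x-cu\in X$, for each $\varphi\in M\subseteq\overline{\operatorname{aff}(M)}\cap\overline{\Lambda}$ we obtain $\varphi(y)=\varphi(x)-c\varphi(u)=c-c=0$, hence $y\in Z(M)$. On the other hand $\psi\in\overline{\Lambda}$ gives $\psi(y)=\psi(x)-c\neq 0$, so $\psi\notin N(Z(M))$. By Proposition \ref{charsatset} this contradicts $\psi\in\operatorname{sat}(M)$, finishing the proof.

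The only delicate point is the separation step: one must invoke Hahn--Banach in the form that yields a functional that is \emph{constant} on the affine subspace (rather than merely bounded on one side), which is standard but worth stating explicitly. The remainder is essentially bookkeeping, and the trick $y=x-cu$ is what converts the affine-constant information into membership in $Z(M)$; this is where the presence of the order unit $u$, normalized by $\varphi(u)=1$ on $\overline{\Lambda}$, is used essentially.
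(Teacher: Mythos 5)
Your proof is correct and follows essentially the same route as the paper: it reduces the nontrivial inclusion to $\sat(M)=N(Z(M))\subseteq\overline{\aff(M)}$ via Proposition \ref{charsatset}, separates $\psi$ from $\overline{\aff(M)}$ by Hahn--Banach, and uses $\varphi(u)=1$ on $\overline{\Lambda}$ to convert the separating functional into an element of $Z(M)$ not annihilated by $\psi$. The only cosmetic differences are that the paper derives constancy of the separating functional on $\aff(M)$ from the one-sided bound $\varphi(x)\le\gamma$ rather than invoking the affine-subspace form of separation directly, and it treats $M=\emptyset$ explicitly, which in your contradiction setup is vacuous anyway.
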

\begin{proof}
Since $\overline{\aff(M)}\subseteq \overline{\spn (M)}$ one has $\Lbar\cap \overline{\aff(M)}\subseteq \Lbar\cap\overline{\spn (M)}=\sat(M)$. It remains to show that $\Lbar\cap\overline{\operatorname{aff}(M)}\supseteq \sat(M)$. This is clear if $M=\emptyset$, so we may assume that $M$ is non-empty.

Let $\psi\in\Lbar\setminus \overline{\operatorname{aff}(M)}$. By the Hahn-Banach theorem, there exist $x\in X$ and $\gamma\in\RR$ such that $\psi(x)>\gamma$ and $\varphi(x)\le\gamma$ for all $\varphi\in\overline{\operatorname{aff}(M)}$. If $\varphi_1,\varphi_2\in\operatorname{aff}(M)$, then $\varphi_1+\alpha(\varphi_2-\varphi_1)\in\operatorname{aff}(M)$ for all $\alpha\in\RR$, hence $\varphi_1(x)=\varphi_2(x)$, as $\varphi(x)\le\gamma$ for all $\varphi\in\operatorname{aff}(M)$. Take $\varphi_0\in M$ and $z:=x-\varphi_0(x)u$. Then for every $\varphi\in\operatorname{aff}(M)$ we have $\varphi(x)=\varphi_0(x)$, so $\varphi(z)=\varphi(x)-\varphi_0(x)\varphi(u)=0$, since $\operatorname{aff}(M)\subseteq\Sigma$. In particular, $z\in Z(M)$. Also, $\psi(z)=\psi(x)-\varphi_0(x)\psi(u)=\psi(x)-\varphi_0(x)>\gamma-\gamma=0$, so that $\psi\not\in N(Z(M))$. Hence $\Lbar\cap \overline{\operatorname{aff}(M)}\supseteq N(Z(M))=\sat(M)$, according to 
Proposition \ref{charsatset}.
\end{proof}
As a preparation for the main result concerning the characterization of bands by means of subsets of $\Lbar$, we provide some lemmas.

\begin{lemma}\label{zm}
Let $B\subseteq X$. 
Then $Z(\car(B))=B^d$.
\end{lemma}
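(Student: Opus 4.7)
The plan is a direct definition-chase using the disjointness characterization recorded at the end of Section~\ref{sec1}, namely that for $x,y\in X$ one has $x\perp y$ if and only if for every $\varphi\in\overline{\Lambda}$ either $\varphi(x)=0$ or $\varphi(y)=0$.

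For the inclusion $Z(\car(B))\subseteq B^d$, I would take $y\in Z(\car(B))$ and fix an arbitrary $b\in B$. For any $\varphi\in\overline{\Lambda}$ there are two cases: either $\varphi(b)=0$, in which case the disjointness condition $\varphi(b)=0$ or $\varphi(y)=0$ is satisfied trivially; or $\varphi(b)\neq 0$, in which case $\varphi\in\car(B)$ by definition of the carrier, and so $\varphi(y)=0$ since $y\in Z(\car(B))$. Thus for every $\varphi\in\overline{\Lambda}$ we have $\varphi(b)=0$ or $\varphi(y)=0$, so $b\perp y$, and since $b\in B$ was arbitrary, $y\in B^d$.

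For the reverse inclusion $B^d\subseteq Z(\car(B))$, take $y\in B^d$ and $\varphi\in\car(B)$. By definition of the carrier there exists $b\in B$ with $\varphi(b)\neq 0$. Since $b\perp y$, the disjointness characterization forces $\varphi(y)=0$. As $\varphi\in\car(B)$ was arbitrary, $y\in Z(\car(B))$.

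There is no real obstacle here; the proof is pure unwinding of definitions once one invokes the pointwise characterization of disjointness in $X$ obtained from the embedding into $C(\overline{\Lambda})$. The only subtle point worth flagging is that this characterization requires $\Phi$ to be a bipositive embedding into a vector lattice cover (Theorem~\ref{thm:main}), which justifies passing from disjointness in $X$ to pointwise vanishing on $\overline{\Lambda}$; everything else is bookkeeping.
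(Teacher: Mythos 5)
Your proof is correct and follows essentially the same route as the paper's: both directions are the same definition-unwinding via the pointwise characterization of disjointness in $C(\overline{\Lambda})$, with only a cosmetic difference in how the case split over $\varphi\in\overline{\Lambda}$ is organized in the first inclusion. No issues.
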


\begin{proof}
Let $x\in Z(\car(B))$, then for all $\varphi\in \car(B)$ one has $\varphi(x)=0$, which implies $\Phi(x)(\phi)=0$. Let $b\in B$. Then for all $\phi\in\Lbar\setminus \car(B)$ it holds that $\phi(b)=0$, so $\Phi(b)(\phi)=0$.
Hence $\Phi(b)\perp\Phi(x)$, so $b\perp x$. Consequently, $x\in B^d$.

Conversely, let $x\in B^d$, i.e., for all $b\in B$ one has $\Phi(b)\perp \Phi(x)$. Let $\phi\in \car(B)$, then there is $b\in B$ such that $\phi(b)\neq 0$. This implies $\Phi(b)(\phi)\neq 0$, so $\Phi(x)(\phi)=\phi(x)=0$. Consequently, $x\in Z(\car(B))$.  
\end{proof}

\begin{lemma}\label{band1}
Let $B\subseteq X$. Then $B\subseteq Z(N(B))\subseteq B^{dd}$.
\end{lemma}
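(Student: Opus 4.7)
The statement has two inclusions and each is short, so the plan is really about picking the right reformulation.

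For the first inclusion, $B \subseteq Z(N(B))$, I would just unfold definitions: if $b \in B$ and $\varphi \in N(B)$, then by definition of $N(B)$ we have $\varphi(b) = 0$, and that is precisely what it means for $b$ to lie in $Z(N(B))$. No machinery is needed here.

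For the second inclusion, $Z(N(B)) \subseteq B^{dd}$, the right move is to invoke Lemma \ref{zm} with $B$ replaced by $B^d$: this gives $B^{dd} = (B^d)^d = Z(\car(B^d))$. Thus it suffices to show the opposite inclusion at the level of the index sets, namely $\car(B^d) \subseteq N(B)$, because then $Z(N(B)) \subseteq Z(\car(B^d)) = B^{dd}$.

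The verification of $\car(B^d) \subseteq N(B)$ is the one place where something slightly nontrivial happens, and it is where I expect the only conceptual obstacle. Pick $\varphi \in \car(B^d)$; by definition there exists $y \in B^d$ with $\varphi(y) \neq 0$. For any $b \in B$ we have $b \perp y$, and the disjointness characterization recalled at the end of Section~\ref{sec1} (using Theorem~\ref{thm:main}) says that for every $\psi \in \overline{\Lambda}$, either $\psi(b) = 0$ or $\psi(y) = 0$. Specializing to $\psi = \varphi$ and using $\varphi(y) \neq 0$ forces $\varphi(b) = 0$. Since $b \in B$ was arbitrary, $\varphi \in N(B)$, which closes the argument.

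Put together, the proof is essentially a two-line unfolding plus one appeal to Lemma~\ref{zm} and one appeal to the pointwise description of disjointness on $\overline{\Lambda}$; the only subtlety is remembering to pass through $B^{dd} = Z(\car(B^d))$ rather than trying to argue about $B^{dd}$ directly.
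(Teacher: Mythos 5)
Your proof is correct, and it reaches the second inclusion by a slightly different route than the paper. The paper argues directly: given $x\in Z(N(B))$ and $y\in B^d$, it checks pointwise on $\Lbar$ that $\varphi(x)=0$ for $\varphi\in N(B)$ and $\varphi(y)=0$ for $\varphi\in\Lbar\setminus N(B)=\car(B)$, concluding $x\perp y$ and hence $x\in B^{dd}$. You instead apply Lemma \ref{zm} to $B^d$ to write $B^{dd}=Z(\car(B^d))$ and reduce everything to the containment $\car(B^d)\subseteq N(B)$, i.e.\ to the disjointness of the carriers of $B$ and $B^d$. These are two packagings of the same fact: your containment $\car(B^d)\subseteq N(B)$ is equivalent to the paper's inline observation that $\car(B)\subseteq N(B^d)$, and both verifications rest on the same pointwise description of disjointness in $C(\Lbar)$ coming from Theorem \ref{thm:main}. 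What your version buys is economy --- it reuses Lemma \ref{zm} and the antitonicity of $Z$ instead of re-deriving half of Lemma \ref{zm} on the spot --- at the cost of one extra layer of indirection; the paper's version is self-contained and makes the disjointness $x\perp y$ visible explicitly. Either argument is complete and acceptable.
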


\begin{proof}
If $x\in B$, then for all $\varphi\in N(B)$ it holds that $\varphi(x)=0$, hence $x\in Z(N(B))$.

Let $x\in Z(N(B))$ and $y\in B^d$. If $\varphi\in\Lbar \setminus N(B)$, then there is $b\in B$ such that $\varphi(b)\neq 0$, consequently $\Phi(b)(\varphi)\neq 0$, hence $\varphi(y)=\Phi(y)(\varphi)=0$. Thus, for each $\varphi\in \Lbar$
we have $\Phi(x)(\varphi)=\varphi(x)=0$ or $\Phi(y)(\varphi)=\varphi(y)=0$, therefore $x\perp y$. Hence $x\in B^{dd}$.
\end{proof}
As a direct consequence of Lemma \ref{band1}, we obtain the following result.

\begin{proposition} \label{prop2}
Let $B$ be a band in $X$. Then $B=Z(N(B))$. 
\end{proposition}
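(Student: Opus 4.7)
The statement follows almost immediately from the material the authors have already assembled, so my plan is essentially to observe that the two halves of Lemma \ref{band1} pinch $Z(N(B))$ between $B$ and $B^{dd}$, and then to invoke the definition of a band to collapse this sandwich.

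More precisely, I would argue as follows. By the definition of a band in the introduction, $B$ being a band means that $(B^d)^d = B$, i.e.\ $B^{dd} = B$. On the other hand, Lemma \ref{band1} gives, for \emph{any} subset $B\subseteq X$, the chain of inclusions
\[
B \;\subseteq\; Z(N(B)) \;\subseteq\; B^{dd}.
\]
Substituting $B^{dd} = B$ in the right-most term, the outer terms coincide, so the middle term is forced to equal $B$ as well. This gives $B = Z(N(B))$.

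There is really no genuine obstacle here; the work has already been done in Lemma \ref{band1} (where the nontrivial Hahn--Banach/disjointness arguments live) and in the characterization of disjointness through values at points of $\overline{\Lambda}$ recalled at the end of Section \ref{sec1}. The only thing one must not forget is that $B^{dd} = B$ is the \emph{definition} of a band in this paper, rather than a theorem requiring proof, so the implication is one line once Lemma \ref{band1} is in hand. I would write the proof as a single sentence: since $B$ is a band, $B = B^{dd}$, and combining this with $B \subseteq Z(N(B)) \subseteq B^{dd}$ from Lemma \ref{band1} yields $B = Z(N(B))$.
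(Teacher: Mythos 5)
Your proof is correct and matches the paper's: the authors likewise present Proposition \ref{prop2} as a direct consequence of Lemma \ref{band1}, collapsing the sandwich $B\subseteq Z(N(B))\subseteq B^{dd}$ using $B^{dd}=B$ from the definition of a band.
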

Therefore, in analogy to bands in $C(\Omega)$ (which were discussed in Section \ref{sec1}), a band in $X$ is determined whenever its carrier is known.

\begin{lemma}\label{Bd}
Let $B\subseteq X$ be such that $B=Z(N(B))$.
Then 
\[B^d=\{x\in X\colon \varphi(x)=0\mbox{ for all } \varphi\in \sat((N(B))^c)\}.\]
\end{lemma}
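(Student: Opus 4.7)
The equation to prove unfolds as a short chain of equalities linking three previously established facts, and the hypothesis $B=Z(N(B))$ need not even be invoked explicitly (though it is the natural setting, since bands satisfy it by Proposition \ref{prop2}).

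First I would rewrite the target set on the right-hand side as $Z(\sat(N(B)^c))$, so that the claim becomes
\[B^d=Z\bigl(\sat(N(B)^c)\bigr).\]
Then I would observe directly from the definitions of $N(B)$ and $\car(B)=\car(\Phi(B))$ that
\[\car(B)=\Lbar\setminus N(B)=N(B)^c,\]
since $\varphi\in\car(B)$ means there is $b\in B$ with $\varphi(b)\neq 0$, i.e.\ $\varphi\notin N(B)$.

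Next I would apply Lemma \ref{zm} to $B$ to obtain
\[B^d=Z(\car(B))=Z(N(B)^c).\]
Finally, Lemma \ref{twobasicprops}(ii), applied to the set $M:=N(B)^c\subseteq\Lbar$, gives
\[Z\bigl(\sat(N(B)^c)\bigr)=Z(N(B)^c).\]
Chaining these two equalities yields $B^d=Z(\sat(N(B)^c))$, which is exactly the claim.

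There is no real obstacle: the work has been done in the preceding lemmas. The only conceptual point worth flagging in the write-up is the identification $\car(B)=N(B)^c$, which is just a re-reading of the definitions but makes the whole argument transparent. I would also remark briefly that the hypothesis $B=Z(N(B))$ is not actually used in the derivation; it is imposed because the intended application of the lemma is to sets $B$ arising as bands, where this identity is automatic by Proposition \ref{prop2}.
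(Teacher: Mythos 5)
Your proof is correct and takes essentially the same route as the paper: Lemma \ref{zm} gives $B^d=Z(\car(B))=Z(N(B)^c)$, and the passage to the saturation is exactly Lemma \ref{twobasicprops}(ii), which the paper simply re-derives inline instead of citing. Your remark that the hypothesis $B=Z(N(B))$ is not actually used in the argument is also accurate.
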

\begin{proof}
Due to Lemma \ref{zm}, one has 
\[B^d= Z(\car(B))=\{x\in X\colon \varphi(x)=0\mbox{ for all } \varphi\in (N(B))^c \}.\]
Let $x\in X$ be such that for all $\varphi\in (N(B))^c$ one has $\varphi(x)=0$, and let $\psi\in \sat((N(B))^c)=\Lbar\cap\overline{\spn ((N(B))^c)}$. Then it is straightforward that $\psi(x)=0$, hence the assertion is proved.
\end{proof}

The next definition introduces those saturated subsets of $\overline{\Lambda}$  the zero sets of which turn out to be bands. Just as a band and its disjoint complement, also such saturated subsets appear in pairs. The ensuing definition makes sense in the more general setting of a topological vector space $E$ (instead of $X'$) and a subset $V$ of $E$ (instead of $\Lbar$). With hindsight and in view of Corollary \ref{cor6}, we call a subset $M$ of $V$ \emph{saturated} (in $V$) if 
\[M=\sat(M):=V\cap\overline{\aff(M)}.\]

\begin{definition} \label{def.bisat}
Let $E$ be a topological vector space and let $V$ be a subset of $E$. A subset $M$ of $V$ is called \emph{bisaturated} (in $V$) if
\[M=V\cap \overline{\aff(V\setminus(V\cap \overline{\aff(V\setminus M)}))  }.\]
For two subsets $M_1$ and $M_2$ of $V$ the set $\{M_1,M_2\}$ is called a \emph{bisaturated pair} (in $V$) if
\[M_1=V\cap \overline{\aff(V\setminus M_2)} \mbox{ and } M_2=V\cap \overline{\aff(V\setminus M_1)}.\]
\end{definition}

For a subset $M$ of $V$ we abbreviate $M^c:=V\setminus M$. 
Note that a set $M\subseteq V$ is bisaturated if and only if
\[M=\sat((\sat(M^c))^c),\]
and for two subsets $M_1,M_2\subseteq V$ the set $\{M_1,M_2\}$ is a bisaturated pair if and only if $M_1=\sat({M_2}^c)$ and $M_2=\sat({M_1}^c)$. 

Clearly, every bisaturated subset of $V$ is saturated. For an example of a saturated set which is not bisaturated see Example \ref{ex.bands} below. Next we give an example of a bisaturated set.

\begin{example} Consider $X=C[0,1]$ (so $\Lbar$ consists of the point evaluations and can be identified with $[0,1]$), and \[B=\left\{x\in C[0,1]\colon x(t)=0 \mbox{ for all } t\in\left[\textstyle\frac{1}{2},1\right]\right\}.\]
We claim that the set $N(B)=\left[\textstyle\frac{1}{2},1\right]$
is bisaturated. Indeed, \[\sat ((N(B))^c)=\left[\textstyle 0,\frac{1}{2}\right],\] so 
\[\sat[(\sat [(N(B))^c])^c]=\left[\textstyle\frac{1}{2},1\right]=N(B).\]
Observe that $(N(B))^c=\left[0,\textstyle\frac{1}{2}\right)$
is not saturated, since every saturated set is closed.
\end{example}

The next lemma relates bisaturated sets and bisaturated pairs in $V$, where $V$ is a subset of a topological vector space $E$, and provides a geometric description of bisaturated sets. Observe that for a bisaturated pair $\{M_1,M_2\}$ in $V$ we have 
\begin{equation}\label{M1M2isall}
M_1\cup M_2=V,
\end{equation} 
since  $M_1\cup M_2= M_1\cup \sat(M_1^c)\supseteq M_1\cup {M_1}^c=V$.

\begin{lemma}\label{lemma.UB}
For two saturated sets $M_1,M_2\subseteq V$ the following four statements are equivalent.
\begin{itemize}
\item[\textrm{(a)}] $M_1$ is bisaturated and $M_2=\sat({M_1}^c)$;
\item[\textrm{(b)}] $\{M_1, M_2\}$ is a bisaturated pair;
\item[\textrm{(c)}] $\overline{\aff(M_1)}=\overline{\aff(V\setminus M_2)}$ and 
$\overline{\aff(M_2)}=\overline{\aff(V\setminus M_1)}$;
\item[\textrm{(d)}] there exist affine subspaces $S_1$ and $S_2$ of $E$ such that 
\[V\subseteq S_1\cup S_2,\ S_1=\overline{\aff(V\setminus S_2)}, \ S_2=\overline{\aff(V\setminus S_1)},\ M_i=V\cap S_i,\ i=1,2.\]
\end{itemize}
\end{lemma}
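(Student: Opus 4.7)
The plan is to prove the four equivalences along the short chain (a)$\Leftrightarrow$(b), (b)$\Leftrightarrow$(c), (c)$\Leftrightarrow$(d), since each pair unfolds naturally. The equivalence (a)$\Leftrightarrow$(b) is purely formal: using the reformulation noted just before the statement, $M_1$ being bisaturated means $M_1=\sat((\sat(M_1^c))^c)$, so combined with $M_2=\sat(M_1^c)$ this is exactly $M_1=\sat(M_2^c)$ and $M_2=\sat(M_1^c)$, which is the bisaturated pair condition. Conversely (b) supplies both identities trivially.

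For (b)$\Leftrightarrow$(c), the key observation is equation \eqref{M1M2isall}: a bisaturated pair satisfies $M_1\cup M_2=V$. Assuming (b), this gives $V\setminus M_2\subseteq M_1$, hence $\overline{\aff(V\setminus M_2)}\subseteq\overline{\aff(M_1)}$; for the reverse, $M_1=V\cap\overline{\aff(V\setminus M_2)}\subseteq\overline{\aff(V\setminus M_2)}$, so $\overline{\aff(M_1)}\subseteq\overline{\aff(V\setminus M_2)}$. The second identity in (c) is symmetric. Conversely, assuming (c), I intersect both sides with $V$ and invoke that $M_i$ is saturated (which is a standing hypothesis of the lemma):
\[
M_1=\sat(M_1)=V\cap\overline{\aff(M_1)}=V\cap\overline{\aff(V\setminus M_2)}=\sat(M_2^c),
\]
and similarly for $M_2$, yielding (b).

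For (c)$\Leftrightarrow$(d), given (c) I set $S_i:=\overline{\aff(M_i)}$, which are closed affine subspaces of $E$. The identity $S_1=\overline{\aff(V\setminus S_2)}$ follows because, using saturatedness, $M_2=V\cap S_2$, hence $V\setminus M_2=V\setminus S_2$, and then (c) gives $S_1=\overline{\aff(M_1)}=\overline{\aff(V\setminus M_2)}=\overline{\aff(V\setminus S_2)}$. The inclusion $V\subseteq S_1\cup S_2$ comes from $V=M_1\cup M_2$, which we have already verified via (b). The identity $M_i=V\cap S_i$ is again just saturatedness. Conversely, given (d), from $V\subseteq S_1\cup S_2$ and $M_i=V\cap S_i$ I immediately obtain $V=M_1\cup M_2$, hence $V\setminus M_2=V\setminus S_2$. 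Then
\[
\overline{\aff(V\setminus M_2)}=\overline{\aff(V\setminus S_2)}=S_1\supseteq\overline{\aff(V\cap S_1)}=\overline{\aff(M_1)},
\]
and the opposite inclusion $\overline{\aff(M_1)}\supseteq\overline{\aff(V\setminus M_2)}$ follows from $V\setminus M_2\subseteq M_1$ (using $V=M_1\cup M_2$); thus (c) holds.

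The proof is essentially bookkeeping, with no deep analytic step; the one point that requires care is that each implication uses $V=M_1\cup M_2$ at a crucial moment, and this identity must be extracted at the right stage (from bisaturatedness in (a)--(c) via \eqref{M1M2isall}, and from the covering hypothesis $V\subseteq S_1\cup S_2$ in (d)). Beyond that, the only subtlety is keeping track of when saturatedness of $M_i$ is being invoked to pass between $\overline{\aff(M_i)}$ and $M_i$ via intersection with $V$; this is precisely what allows the affine-hull statement (c) and the affine-subspace statement (d) to exchange freely with the set-theoretic statements (a), (b).
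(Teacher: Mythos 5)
Your proof is correct and uses essentially the same ingredients as the paper's own argument: unfolding the definition of bisaturation, the identity $M_1\cup M_2=V$ from \eqref{M1M2isall}, the choice $S_i=\overline{\aff(M_i)}$, and saturatedness of the $M_i$ to pass between $\overline{\aff(M_i)}$ and $M_i=V\cap\overline{\aff(M_i)}$. The only difference is organizational: the paper closes the single cycle (a)$\Rightarrow$(b)$\Rightarrow$(c)$\Rightarrow$(d)$\Rightarrow$(a), whereas you prove three biconditionals and therefore supply direct (and correct) arguments for (c)$\Rightarrow$(b) and (d)$\Rightarrow$(c) that the paper obtains for free from the cycle.
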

\begin{proof}
(a)$\Rightarrow$(b): $M_1$ is bisaturated, so $M_1=\sat((\sat({M_1}^c))^c)=\sat({M_2}^c)$. 

(b)$\Rightarrow$(c): We have 
\[M_2=V\cap\overline{\aff({M_1}^c)}\subseteq \overline{\aff({M_1}^c)},\]
 so $\overline{\aff(M_2)}\subseteq \overline{\aff({M_1}^c)}$. Also, 
\[M_2=V\cap \overline{\aff({M_1}^c)}\supseteq V\cap {M_1}^c={M_1}^c,\] 
so $\overline{\aff(M_2)}\supseteq\overline{\aff({M_1}^c)}$. Hence $\overline{\aff(M_2)}=\overline{\aff({M_1}^c)}$. Similarly, $\overline{\aff(M_1)}=\overline{\aff({M_2}^c)}$.

(c)$\Rightarrow$(d): Choose $S_i=\overline{\aff(M_i)}$, $i=1,2$. Since $M_i$ is saturated, $M_i=V\cap\overline{\aff(M_i)}=V\cap S_i$. Hence 
\[S_1=\overline{\aff(M_1)}=\overline{\aff(V\setminus M_2)}=\overline{\aff(V\setminus (V\cap S_2))}=\overline{\aff(V\setminus S_2)}\]
and, similarly, $S_2=\overline{\aff(V\setminus S_1)}$.

(d)$\Rightarrow$(a): Since $V\setminus M_1=V\setminus (V\cap S_1)=V\setminus S_1$, we have
\begin{eqnarray*}
M_1&=&V\cap S_1
=V\cap\overline{\operatorname{aff}(V\setminus S_2)}
=V\cap\overline{\operatorname{aff}\left(V\setminus\overline{\operatorname{aff}(V\setminus S_1)}\right)}\\
&=& V\cap\overline{\operatorname{aff}\left(V\setminus\overline{\operatorname{aff}(V\setminus M_1)}\right)}
=V\cap\overline{\operatorname{aff}\left(V\setminus\left(V\cap\overline{\operatorname{aff}(V\setminus M_1)}\right)\right)}\\&=&\sat((\sat({M_1}^c))^c),
\end{eqnarray*}
thus $M_1$ is bisaturated.
\end{proof}

For $E=X'$ and $V=\Lbar$, we relate bisaturated pairs in $\Lbar$ to disjoint complements in $X$.

\begin{proposition}\label{bipairs-disco}
Let $M_1,M_2\subseteq\Lbar$ be saturated sets and $B_1=Z(M_1)$ and $B_2=Z(M_2)$. Then $\{M_1,M_2\}$ is a bisaturated pair if and only if $B_1={B_2}^d$ and $B_2={B_1}^d$.
%
\end{proposition}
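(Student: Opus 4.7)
The plan is to translate both sides of the desired equivalence into statements about the operators $N(\cdot)$ and $Z(\cdot)$, and then use the duality $\sat(N)=N(Z(N))$ from Proposition~\ref{charsatset}, together with $Z(\car(B))=B^d$ from Lemma~\ref{zm} and the identity $B=Z(N(B))$ for bands from Proposition~\ref{prop2}. The whole argument is essentially bookkeeping through these three identities; there is no serious analytical obstacle.

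First I would exploit the hypothesis that each $M_i$ is saturated. By Proposition~\ref{charsatset}, saturatedness gives
\[M_i=\sat(M_i)=N(Z(M_i))=N(B_i),\qquad i=1,2.\]
In particular $M_i^c=\overline{\Lambda}\setminus N(B_i)=\car(B_i)$, and hence by Lemma~\ref{zm},
\[Z(M_i^c)=Z(\car(B_i))=B_i^{d}.\]
Applying Proposition~\ref{charsatset} once more gives
\[\sat(M_i^c)=N(Z(M_i^c))=N(B_i^{d}).\]
So the bisaturated-pair condition ``$M_1=\sat(M_2^c)$ and $M_2=\sat(M_1^c)$'' is equivalent to
\[M_1=N(B_2^{d})\quad\text{and}\quad M_2=N(B_1^{d}).\]

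For the forward direction, assume this reformulated condition. Since $B_i^{d}$ is automatically a band (as noted in Section~\ref{sec1}), Proposition~\ref{prop2} applies to it, and
\[B_2^{d}=Z(N(B_2^{d}))=Z(M_1)=B_1,\]
and symmetrically $B_1^{d}=B_2$. For the converse, assume $B_1=B_2^{d}$ and $B_2=B_1^{d}$; then applying $N(\cdot)$ and using $M_i=N(B_i)$ from the first step yields
\[M_1=N(B_1)=N(B_2^{d})=\sat(M_2^c),\qquad M_2=N(B_2)=N(B_1^{d})=\sat(M_1^c),\]
which is exactly the bisaturated-pair condition. The only thing one must be careful about is invoking saturatedness of $M_i$ at the right moment to get the identification $M_i=N(B_i)$, and citing that disjoint complements are bands so that Proposition~\ref{prop2} is applicable on the $B_i^d$ side.
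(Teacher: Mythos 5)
Your proof is correct: every step checks out, and the key identities you invoke ($\sat(M)=N(Z(M))$ from Proposition~\ref{charsatset}, $Z(\car(B))=B^d$ from Lemma~\ref{zm}, $B=Z(N(B))$ for bands from Proposition~\ref{prop2}, and the fact that $B^d$ is always a band) are all available at this point in the paper with no circularity. Your route coincides with the paper's for the ``if'' direction, where both arguments reduce the bisaturated-pair condition to $M_i=N(B_j^{\,d})$ via the same dictionary. The genuine difference is in the ``only if'' direction: the paper verifies the two inclusions $B_2\subseteq B_1^{\,d}$ and $B_1^{\,d}\subseteq B_2$ by hand --- the first by a pointwise disjointness argument that makes explicit use of the covering property $M_1\cup M_2=\Lbar$ of a bisaturated pair, the second by a carrier/saturation argument --- whereas you bypass both inclusions by rewriting the hypothesis as $M_1=N(B_2^{\,d})$, applying $Z$, and using that $B_2^{\,d}$ is a band so that Proposition~\ref{prop2} gives $Z(N(B_2^{\,d}))=B_2^{\,d}$ directly. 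Your version is shorter and makes the two directions of the equivalence symmetric (one is ``apply $Z$'', the other ``apply $N$''), at the cost of outsourcing the disjointness content to Proposition~\ref{prop2} and the cited fact that disjoint complements are bands; the paper's version is more self-contained and shows concretely where the geometric condition $M_1\cup M_2=\Lbar$ enters.
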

\begin{proof}
Let $\{M_1,M_2\}$ be a bisaturated pair.
To show that $B_2\subseteq {B_1}^d$, let $x_1\in B_1$, $x_2\in B_2$. For all $\varphi\in M_1$ one has $\Phi(x_1)(\varphi)=0$ and for all 
$\varphi\in M_2$ one has $\Phi(x_2)(\varphi)=0$. Since $M_1\cup M_2\supseteq \Lbar$, it follows that $\Phi(x_1)\perp \Phi(x_2)$, which implies $x_1\perp x_2$. Hence $B_2\subseteq {B_1}^d$.

Vice versa, let $x\in {B_1}^d$. Then for every $y\in B_1$ we have $x\perp y$ and hence $\Phi(x)\perp \Phi(y)$. Therefore, $\Phi(x)(\varphi)=0$ for every \[\varphi\in \car(B_1)=\Lbar\setminus N(B_1)=\Lbar\setminus M_1,\]
due to Proposition \ref{charsatset}. 
Thus, $\Phi(x)(\varphi)=\varphi(x)=0$ for every 
\[\varphi\in\sat(\car(B_1))=\sat(\Lbar\setminus M_1)=M_2.\]
Hence $x\in Z(M_2)=B_2$. Therefore ${B_1}^d=B_2$ and, by symmetry, ${B_2}^d=B_1$.

If $B_2={B_1}^d$, then due to Proposition \ref{charsatset},
\[\Lbar\setminus M_1=\Lbar\setminus N(Z(M_1))=\Lbar\setminus N(B_1)=\car(B_1),\]
so, by Lemma \ref{zm},
\[Z(\Lbar\setminus M_1)=Z(\car(B_1))={B_1}^d.\]
Again by Proposition \ref{charsatset},
\[M_2= N(Z(M_2))=N(B_2)=N({B_1}^d)=N(Z(\Lbar\setminus M_1))=\sat(\Lbar\setminus M_1).\]
By symmetry, $\{M_1,M_2\}$ is a bisaturated pair.
\end{proof}

We arrive at the main result in this section. The assumption in  the theorem is natural due to Proposition \ref{prop2}.

\begin{theorem}
\label{mainthm}
Let $B\subseteq X$ be such that $B=Z(N(B))$. Then $B$ is a band if and only if $N(B)$ is bisaturated.
\end{theorem}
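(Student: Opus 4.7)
The plan is to use Proposition \ref{bipairs-disco} as the main bridge, since it already translates the condition ``$\{M_1,M_2\}$ is a bisaturated pair'' into ``$B_1=B_2^d$ and $B_2=B_1^d$'' whenever $B_i=Z(M_i)$ with $M_i$ saturated. In both directions I will set $M_1:=N(B)$ and pick a suitable partner $M_2$. Recall that $N(\,\cdot\,)$ always produces a saturated set (noted right after Definition of saturation), so $M_1$ is automatically saturated; and by definition ``bisaturated'' means $M_1=\sat((\sat(M_1^c))^c)$, which, together with saturation of $M_1$, is equivalent to $\{M_1,\sat(M_1^c)\}$ being a bisaturated pair by Lemma \ref{lemma.UB}(a)$\Leftrightarrow$(b).

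For the forward direction ($B$ a band $\Rightarrow N(B)$ bisaturated), take $M_1:=N(B)$ and $M_2:=N(B^d)$. Since $B^d$ is always a band (stated in Section \ref{sec1}), Proposition \ref{prop2} gives both $B=Z(N(B))=Z(M_1)$ and $B^d=Z(N(B^d))=Z(M_2)$. Now $B_1^d=B^d=B_2$ holds trivially, and $B_2^d=B^{dd}=B=B_1$ holds because $B$ is a band. Since $M_1,M_2$ are saturated, Proposition \ref{bipairs-disco} then tells us that $\{M_1,M_2\}$ is a bisaturated pair, whence $M_1=N(B)$ is bisaturated.

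For the backward direction ($N(B)$ bisaturated $\Rightarrow B$ a band), set $M_1:=N(B)$ and $M_2:=\sat(M_1^c)$. Then $M_2$ is saturated by Lemma \ref{twobasicprops}(i), and the bisaturation of $M_1$ together with Lemma \ref{lemma.UB}(a)$\Rightarrow$(b) makes $\{M_1,M_2\}$ a bisaturated pair. Put $B_1:=Z(M_1)=Z(N(B))=B$ (the hypothesis on $B$) and $B_2:=Z(M_2)$. Proposition \ref{bipairs-disco} yields $B_2=B_1^d$ and $B_1=B_2^d$, which combine to $B=B_1=B_2^d=(B_1^d)^d=B^{dd}$, so $B$ is a band.

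There is no real obstacle: the preceding lemmas and, in particular, Proposition \ref{bipairs-disco} have been tailored precisely for this conclusion. The only point requiring attention is choosing the right partner set $M_2$ in each direction ($N(B^d)$ for $\Rightarrow$, and $\sat(N(B)^c)$ for $\Leftarrow$) and verifying that the hypothesis $B=Z(N(B))$ is used exactly once, namely in the backward direction, to identify $Z(M_1)$ with $B$ itself.
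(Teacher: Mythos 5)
Your proof is correct and follows essentially the same route as the paper: both directions are funnelled through Proposition \ref{bipairs-disco} and Lemma \ref{lemma.UB}, and your backward direction is identical to the paper's. The only (harmless) difference is in the forward direction, where you take the partner set $M_2=N(B^d)$, certified by Proposition \ref{prop2} applied to the band $B^d$, whereas the paper takes $M_2=\sat(N(B)^c)$ and certifies $B^d=Z(M_2)$ via Lemma \ref{Bd}; these are the same set, so the two arguments coincide.
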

\begin{proof}
Suppose that $B_1:=B=Z(N(B))$ is a band. Define $M_1:=N(B)$ and $M_2:=\sat({M_1}^c)$. Then by Lemma \ref{Bd}, $B_2:={B_1}^d=Z(M_2)$. Hence by Proposition \ref{bipairs-disco}, $\{M_1,M_2\}$ is a bisaturated pair. So $N(B)$ is a bisaturated set, according to Lemma \ref{lemma.UB}.

Conversely, let $M_1:= N(B)$ be bisaturated and let $B_1:=B=Z(N(B))$. Let $M_2:=\sat({M_1}^c)$. Then, by Lemma \ref{lemma.UB}, $\{M_1,M_2\}$ is a bisaturated pair. Let $B_2:= Z(M_2)$. Proposition \ref{bipairs-disco} yields that $B_1={B_2}^d={B_1}^{dd}$, hence $B=B_1$ is a band. 
\end{proof}


In Section \ref{sec4} we will use Theorem \ref{mainthm} to bound the number of bands in finite dimensional partially ordered vector spaces.

\section{Extension of bands}\label{sec3}

Let $X$ be a pre-Riesz space and let $(Y,i)$ be a vector lattice cover of $X$, for instance its Riesz completion. 
Recall that $u\perp v$ in $X$ is equivalent to $i(u)\perp i(v)$ in $Y$,
such that for $M\subseteq X$ we have $i(M^d)=i(M)^d\cap i(X)$. 

If $B$ is a band in $X$, then there are two natural procedures to 
construct a band $\widehat{B}$ in $Y$ such that 
\begin{equation}\label{exbands}
\widehat{B}\cap i(X)=i(B).
\end{equation} 
They are as follows:
\begin{itemize}
\item[(I)] $\widehat{B}:=(i(B^d))^d$.

In \cite[Proof of Proposition 5.12]{gaanskalauch} it is shown that
\eqref{exbands} is satisfied.
\item[(II)] 
$\widehat{B}:=(i(B))^{dd}$.

In this case, the subsequent statement shows \eqref{exbands}.
\end{itemize}
\begin{proposition}
\begin{itemize}
\item[(a)] If $B\subseteq X$, then $i(B)^{dd}\cap i(X)\subseteq i(B^{dd})$.
\item[(b)] If $B$ is a band in $X$, then $i(B)^{dd}\cap i(X) = i(B)$.
\end{itemize}
\end{proposition}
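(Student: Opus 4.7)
The main tool will be the identity $u\perp v$ in $X$ $\Longleftrightarrow$ $i(u)\perp i(v)$ in $Y$, which was recalled in Section \ref{sec1} and used already in the remark that $i(M^d)=i(M)^d\cap i(X)$. Both parts follow from this, plus the fact that $i$ is injective (being bipositive with $X$ a pre-Riesz space contains no $0$-cone issues), so any $y\in i(X)$ has a unique preimage $x\in X$.

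For part (a), the plan is: take $y\in i(B)^{dd}\cap i(X)$ and write $y=i(x)$. The goal is to show $x\in B^{dd}$, i.e.\ $x\perp z$ for every $z\in B^d$. The key observation is that $i(B^d)\subseteq i(B)^d$; indeed, if $z\in B^d$ then $z\perp b$ for all $b\in B$, so $i(z)\perp i(b)$ for all $b\in B$, i.e.\ $i(z)\in i(B)^d$. Now since $y\in i(B)^{dd}$, we have $y\perp i(z)$ in $Y$, that is $i(x)\perp i(z)$, which by the equivalence of disjointness pulls back to $x\perp z$ in $X$. This holds for every $z\in B^d$, so $x\in B^{dd}$ and $y=i(x)\in i(B^{dd})$.

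For part (b), the inclusion $\subseteq$ is immediate from part (a) together with $B^{dd}=B$ (since $B$ is a band). For the reverse inclusion $i(B)\subseteq i(B)^{dd}\cap i(X)$, note that trivially $i(B)\subseteq i(X)$, and $i(B)\subseteq i(B)^{dd}$ holds by the general fact that any set is contained in its double disjoint complement.

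I do not expect a genuine obstacle here: once the equivalence of disjointness under $i$ is invoked, both statements reduce to one-line set-theoretic manipulations. The only point that requires a sliver of care is confirming that the reverse inclusion $i(B)^d\subseteq i(B^d)$ (which would be needed for an equality in part (a)) is \emph{not} claimed; only one direction is used, and that is the easy direction coming from $i$ preserving disjointness.
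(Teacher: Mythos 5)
Your proof is correct and follows essentially the same route as the paper: both arguments rest on the observation that $i(B^d)\subseteq i(B)^d$ (equivalently, the identity $i(M^d)=i(M)^d\cap i(X)$) and on pulling disjointness back through the bipositive embedding $i$. The paper merely phrases the final step of (a) as $x\in i(B^d)^d\cap i(X)=i(B^{dd})$ instead of pulling back each relation $x\perp z$ individually, which is the same argument in different packaging.
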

\begin{proof} 
(a) Let $x\in i(B)^{dd}\cap i(X)$. For $y\in i(B^d)=i(B)^d\cap i(X)$ we have $x\perp y$, so $x\in i(B^d)^d\cap i(X)=i(B^{dd})$.


(b) Clearly, $i(B)\subseteq i(B)^{dd}\cap i(X)$. As $B=B^{dd}$, (a) completes the proof.
\end{proof}

The band $\widehat{B}$ in (II) is the smallest band in $Y$ extending $B$ in the sense of \eqref{exbands}. Indeed, if $F$ is a band in $Y$ with $F\cap i(X)= i(B)$, then $F^d\subseteq i(B)^d$, so $F=F^{dd}\supseteq i(B)^{dd}=\widehat{B}$.

\medskip
We further analyze procedures (I) and (II) in the case that $Y$ is given by the functional representation as a subspace of $C(\Lbar)$. $B$ and 
$\widehat{B}$
are determined by means of their carriers, so it is natural to ask how $\car(B)$ and $\car(\widehat{B})$
are related and how to determine $\car(\widehat{B})$ if $\car(B)$ is known.


For every band $F$ in $C(\overline{\Lambda})$ extending $B$ one has 
\[\car(B)\subseteq \car(\Phi(B)^{dd})\subseteq \car(F),\]
since $\Phi(B)^{dd}$ corresponding to (II) is the smallest band extending $B$.
In general, $\car(B)$ and $\car(\Phi(B)^{dd})$ may differ, as the next example shows.
\begin{example}\label{exa:namioka}
Let \[X=\left\{x\in C[-1,1]\colon x(0)=\textstyle\frac{1}{2}(x(-1)+x(1))\right\}.\]
The function $u=1$ is an order unit in $X$, so $\Lambda$ can be identified with $[-1,0)\cup(0,1]$, and $\Lbar$ with $[-1,1]$. 
Let \[B=\left\{x\in X\colon x(1)=0 \mbox{ and } x(t)=0 \mbox{ for all } t\in [-1,0] \right\}.\]
Then  \[B^d=\left\{x\in X\colon x(-1)=0 \mbox{ and } x(t)=0 \mbox{ for all } t\in [0,1] \right\}.\]
Furthermore, $B=B^{dd}$, so $B$ is a band in $X$ with $\car(B)=(0,1)$.
In $C(\Lbar)$, we obtain
\[\Phi(B)^d=\left\{x\in C(\Lbar)\colon x(t)=0 \mbox{ for all } t\in [0,1] \right\}\]
and \[\Phi(B)^{dd}=\left\{x\in C(\Lbar)\colon x(t)=0 \mbox{ for all } t\in [-1,0] \right\},\]
so $\car(\Phi(B)^{dd})=(0,1]\neq \car(B)$.  
\end{example}
Let us investigate procedure (I) in more detail.
To determine $\car(\Phi(B^d)^d)$ by means of $\car(B)$, we need the following identity.
\begin{lemma}\label{papagei}
For a subset $M\subseteq X$ one has 
$\car(\Phi(M)^d)=\overline{\car(M)}^c$.  
\end{lemma}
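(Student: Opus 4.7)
The plan is to compute $\Phi(M)^d$ explicitly as the set of continuous functions on $\Lbar$ that vanish on $\overline{\car(M)}$, and then read off the carrier via Urysohn's lemma.

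First I would characterize the disjoint complement. Since $\Phi(M) \subseteq C(\Lbar)$, disjointness in $C(\Lbar)$ is pointwise: $f \perp g$ iff for every $\varphi \in \Lbar$, $f(\varphi)=0$ or $g(\varphi)=0$. Thus $g \in \Phi(M)^d$ iff for every $m \in M$ and every $\varphi \in \Lbar$, $\Phi(m)(\varphi)=0$ or $g(\varphi)=0$. Rephrased: $g(\varphi)=0$ whenever $\varphi \in \car(M)$, i.e., the open set $\{\varphi : g(\varphi) \neq 0\}$ is contained in $\Lbar \setminus \car(M)$. Because this set is open and $g$ is continuous, this is equivalent to
\[
g(\varphi)=0 \text{ for all } \varphi \in \overline{\car(M)}.
\]
Hence $\Phi(M)^d = \{g \in C(\Lbar) : g \equiv 0 \text{ on } \overline{\car(M)}\}$.

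Next I would compute the carrier. The inclusion $\car(\Phi(M)^d) \subseteq \overline{\car(M)}^c$ is immediate from the previous displayed equation: any $\varphi$ lying in $\overline{\car(M)}$ is annihilated by every $g \in \Phi(M)^d$, so cannot be in the carrier. For the reverse inclusion, given $\varphi \in \Lbar \setminus \overline{\car(M)}$, the sets $\{\varphi\}$ and $\overline{\car(M)}$ are disjoint closed subsets of the compact Hausdorff (hence normal) space $\Lbar$. Urysohn's lemma then yields $g \in C(\Lbar)$ with $g(\varphi)=1$ and $g \equiv 0$ on $\overline{\car(M)}$; this $g$ lies in $\Phi(M)^d$ and witnesses $\varphi \in \car(\Phi(M)^d)$.

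Combining both inclusions gives $\car(\Phi(M)^d) = \overline{\car(M)}^c$. The only subtle point is the passage from $\car(M)^c$ to $\overline{\car(M)}^c$ in the characterization of $\Phi(M)^d$, which relies on the fact that nonzero sets of continuous functions are open. The rest is a standard Urysohn construction, so I do not anticipate a genuine obstacle.
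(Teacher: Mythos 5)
Your proof is correct and follows essentially the same route as the paper's: both characterize $\Phi(M)^d$ as the functions vanishing on $\overline{\car(M)}$ (using pointwise disjointness in $C(\Lbar)$ and the closedness of zero sets of continuous functions) and then obtain the reverse inclusion by Urysohn's lemma. No substantive differences.
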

\begin{proof}
For $s\in C(\overline{\Lambda})$ one has $s\in \Phi(M)^d$ if and only if $s(\varphi)=0$ for all $\varphi\in \car(M)$, which is equivalent to  
$s(\varphi)=0$ for all $\varphi\in \overline{\car(M)}$. 
Hence $\car(\Phi(M)^d)\subseteq\overline{\car(M)}^c$. If $\psi\in \overline{\car(M)}^c$, then by Urysohn's lemma there exists $s\in C(\overline{\Lambda})$ such that $s(\psi)\neq 0$ and $s(\varphi)=0$ for all $\varphi\in \overline{\car(M)}$, so $s\in \Phi(M)^d$. Therefore
$\car(\Phi(M)^d)\supseteq\overline{\car(M)}^c$.
\end{proof}
\begin{theorem}\label{th:proc1}
For a band $B$ in $X$, the carrier of $\Phi(B^d)^d$ is given by 
\[\car(\Phi(B^d)^d)= \left(\overline{(\sat(\car(B)))^c}\right)^c,\]
and $\Phi(B^d)^d$ is the largest ideal with carrier $\left(\overline{(\sat(\car(B)))^c}\right)^c$.
\end{theorem}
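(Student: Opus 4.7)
The plan is to chain together Lemma \ref{papagei}, Lemma \ref{zm} and Proposition \ref{charsatset} to rewrite the carrier on the left-hand side step by step, without any new technical argument.

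First I would apply Lemma \ref{papagei} with $M:=B^{d}$, which immediately gives
\[
\car(\Phi(B^{d})^{d})=\overline{\car(B^{d})}^{\,c}.
\]
So everything reduces to identifying $\car(B^{d})$, i.e.\ identifying $N(B^{d})$ inside $\Lbar$. Here is where Lemma \ref{zm} and Proposition \ref{charsatset} fit together: by Lemma \ref{zm} applied to $B$ one has $B^{d}=Z(\car(B))$, and Proposition \ref{charsatset} says $\sat(\car(B))=N(Z(\car(B)))$. Combining these two equalities yields
\[
N(B^{d})=N(Z(\car(B)))=\sat(\car(B)),
\]
hence $\car(B^{d})=\Lbar\setminus N(B^{d})=(\sat(\car(B)))^{c}$. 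Substituting this into the first display gives the claimed formula
\[
\car(\Phi(B^{d})^{d})=\overline{(\sat(\car(B)))^{c}}^{\,c}.
\]

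For the second assertion I would simply note that $\Phi(B^{d})^{d}$ is a disjoint complement inside the vector lattice $C(\Lbar)$ and therefore a band (as recalled in Section~\ref{sec1}). By Proposition \ref{pro2} every band in $C(\Lbar)$ is of the form $I_{O}$ for some open $O\subseteq \Lbar$, and since $I_{O}$ is by construction the \emph{largest} ideal with carrier $O$, the identification $\Phi(B^{d})^{d}=I_{\car(\Phi(B^{d})^{d})}$ gives the desired maximality statement.

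I do not expect a real obstacle: the proof is essentially a bookkeeping exercise combining the characterisation of saturation as $N\circ Z$ (Proposition \ref{charsatset}) with the identity $Z(\car(B))=B^{d}$ (Lemma \ref{zm}) and the translation of $\car(\Phi(\cdot)^{d})$ into set-theoretic operations on $\Lbar$ (Lemma \ref{papagei}). The only point that would need a moment's care is making sure that one may apply Lemma \ref{zm} and Proposition \ref{charsatset} to $\car(B)\subseteq \Lbar$ without requiring $B$ itself to be a band for these intermediate identities; the hypothesis that $B$ is a band is only used implicitly, in that it guarantees the formula has its intended meaning and that $\Phi(B^{d})^{d}$ really is the band produced by extension procedure~(I).
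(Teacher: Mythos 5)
Your proof is correct and follows essentially the same route as the paper: both reduce the claim to the identity $\car(B^d)=(\sat(\car(B)))^c$ via Proposition \ref{charsatset} and then conclude with Lemma \ref{papagei} applied to $M=B^d$. The only (harmless) difference is that the paper obtains $B^d=Z(\car(B))$ through Lemma \ref{Bd} together with Lemma \ref{twobasicprops}(ii), whereas you invoke Lemma \ref{zm} directly -- which is in fact slightly cleaner, since Lemma \ref{zm} holds for arbitrary subsets $B\subseteq X$ and is the ingredient from which Lemma \ref{Bd} is derived in the first place.
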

\begin{proof}
Due to Lemma \ref{Bd} and \ref{twobasicprops} (ii), we have
\[B^d=Z(\sat(N(B)^c))=Z(N(B)^c).\]
Then Proposition \ref{charsatset} implies 
\[N(B^d)=\sat(N(B)^c)=\sat(\car(B)),\]
hence $\car(B^d)=(\sat(\car(B)))^c$. Applying Lemma \ref{papagei} with $M=B^d$ completes the proof. 
\end{proof}
We proceed by discussing procedure (II).
First we show that the carrier of $B$ and the carrier of $\widehat{B}$ have the same closure (in other words, the supports of $B$ and $\widehat{B}$ are the same).

\begin{proposition}\label{prop3}
For a band $B$ in $X$ one has $\overline{\car(B)}=\overline{\car(\Phi(B)^{dd})}$.
\end{proposition}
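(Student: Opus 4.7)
The forward inclusion $\overline{\car(B)}\subseteq\overline{\car(\Phi(B)^{dd})}$ is immediate from the fact that $\Phi(B)\subseteq\Phi(B)^{dd}$ (which holds for any set and its double disjoint complement): this gives $\car(B)\subseteq\car(\Phi(B)^{dd})$, and we then take closures.

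For the reverse inclusion, I would first observe that the Urysohn-lemma argument in the proof of Lemma \ref{papagei} in fact establishes the following slightly more general identity: for \emph{any} subset $S\subseteq C(\overline{\Lambda})$ one has $\car(S^d)=\overline{\car(S)}^c$. Only the fact that $S$ is a set of continuous functions on $\overline{\Lambda}$ is used; whether or not $S$ is of the form $\Phi(M)$ for some $M\subseteq X$ is irrelevant to the argument. Applying this identity with $S=\Phi(B)$ yields $\car(\Phi(B)^d)=\overline{\car(B)}^c$, and applying it once more with $S=\Phi(B)^d$ gives
\[
\car(\Phi(B)^{dd})\;=\;\bigl(\overline{\,\overline{\car(B)}^c\,}\bigr)^c\;=\;\operatorname{int}\!\bigl(\overline{\car(B)}\bigr),
\]
the last equality being the elementary topological identity $\overline{A^c}^{\,c}=\operatorname{int}(A)$ applied to the closed set $A=\overline{\car(B)}$. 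Since $\operatorname{int}(\overline{\car(B)})\subseteq\overline{\car(B)}$, taking closures yields $\overline{\car(\Phi(B)^{dd})}\subseteq\overline{\car(B)}$, which completes the proof.

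The only conceptual point that requires attention is the mild generalization of Lemma \ref{papagei} to arbitrary subsets of $C(\overline{\Lambda})$; once this is granted the rest is point-set topology on $\overline{\Lambda}$, and the band hypothesis on $B$ actually plays no explicit role in the argument. As a byproduct the formula $\car(\Phi(B)^{dd})=\operatorname{int}(\overline{\car(B)})$ also makes the regular-openness of the carrier of the extended band (cf.\ Proposition \ref{pro2}) transparent, since the interior of the closure of the open set $\car(B)$ is automatically regularly open.
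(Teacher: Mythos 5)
Your proof is correct, but it is organized differently from the paper's. The paper proves the reverse inclusion by contradiction: it picks $\varphi_0\in\overline{\car(\Phi(B)^{dd})}\setminus\overline{\car(B)}$, uses Urysohn's lemma to produce $x\in C(\Lbar)$ vanishing on $\overline{\car(B)}$ with $x(\varphi_0)\neq 0$, notes $x\in\Phi(B)^d$, and then derives a contradiction from the existence of some $y\in\Phi(B)^{dd}$ not vanishing at a nearby point of $\car(\Phi(B)^{dd})$. You instead observe that the proof of Lemma \ref{papagei} really shows $\car(S^d)=\overline{\car(S)}^{\,c}$ for an arbitrary subset $S$ of $C(\Lbar)$ -- which is accurate, since only continuity and Urysohn's lemma are used there -- and apply this twice to get the exact formula $\car(\Phi(B)^{dd})=\operatorname{int}\bigl(\overline{\car(B)}\bigr)$, from which the proposition follows by taking closures. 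The two arguments use the same underlying tool (Urysohn's lemma on the compact Hausdorff space $\Lbar$), but yours buys more: it establishes Theorem \ref{th:proc2} directly, without routing through Proposition \ref{pro2} and the regular-openness of carriers of bands in $C(\Omega)$, and it shows that the band hypothesis on $B$ is not needed (the identity $\overline{\car(S)}=\overline{\car(S^{dd})}$ holds for any $S\subseteq C(\Lbar)$). The only thing to make explicit if you write this up is the one-line verification that $s\in S^d$ iff $s$ vanishes on $\car(S)$, iff (by continuity) $s$ vanishes on $\overline{\car(S)}$, so that the Urysohn step applies verbatim to arbitrary $S$.
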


\begin{proof}
We know that $\car(B)\subseteq \car(\Phi(B)^{dd})$, so $\overline{\car(B)}\subseteq \overline{\car(\Phi(B)^{dd})}$.

Vice versa, suppose there is $\varphi_0\in \overline{\car(\Phi(B)^{dd})}\setminus \overline{\car(B)}$. By Urysohn's lemma, there is $x\in C(\Lbar)$ such that $x(\varphi_0)\neq 0$ and $x(\varphi)=0$ for all $\varphi\in \overline{\car(B)}$. If $b\in B$,
then $\Phi(b)(\varphi)=\varphi(b)=0$ for all $\varphi\in \Lbar\setminus \car(B)$, so $x\perp \Phi(b)$ for all $b\in B$.
Hence $x\in \Phi(B)^d$. Since $\varphi_0\in \overline{\car(\Phi(B)^{dd})}$ and 
$x(\varphi_0)\neq 0$, there exists $\varphi_1\in \car(\Phi(B)^{dd})$ such that $x(\varphi_1)\neq 0$. Moreover, there is $y\in \Phi(B)^{dd}$ such that
$y(\varphi_1)\neq 0$. This implies $x\not\perp y$, which contradicts $x\in \Phi(B)^d$, $y\in \Phi(B)^{dd}$. Thus $\overline{\car(B)}= \overline{\car(\Phi(B)^{dd})}$.
 \end{proof}
 Since $\Phi(B)^{dd}$ is a band in $C(\Lbar)$, the set $\car(\Phi(B)^{dd})$ is regularly open, i.e. $\car(\Phi(B)^{dd})=\operatorname{int}\left(\overline{\car(\Phi(B)^{dd})}\right)$. Now apply Proposition \ref{prop3} to obtain the following result.
\begin{theorem} \label{th:proc2}
For a band $B$ in $X$, the carrier of $\Phi(B)^{dd}$ is given by \[\car(\Phi(B)^{dd})=\operatorname{int}\left(\overline{\car(B)}\right),\]
and $\Phi(B)^{dd}$ is the largest ideal with carrier
$\operatorname{int}\left(\overline{\car(B)}\right)$.
\end{theorem}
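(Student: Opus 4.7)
The theorem is essentially a direct consequence of Proposition \ref{prop3} together with the description of bands in $C(\overline{\Lambda})$ recalled in Section \ref{sec1}. My plan is to establish the carrier formula first and then deduce the maximality statement.

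The first step is to observe that $\Phi(B)^{dd}$ is a band in the vector lattice $C(\overline{\Lambda})$ (double disjoint complements are always bands). Hence by Proposition \ref{pro2} its carrier is regularly open, so
\[
\car(\Phi(B)^{dd}) = \operatorname{int}\bigl(\overline{\car(\Phi(B)^{dd})}\bigr).
\]
The hard work has already been done in Proposition \ref{prop3}, which identifies $\overline{\car(\Phi(B)^{dd})}$ with $\overline{\car(B)}$. Substituting gives
\[
\car(\Phi(B)^{dd}) = \operatorname{int}\bigl(\overline{\car(B)}\bigr),
\]
which is the first assertion.

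For the maximality claim, recall from the discussion preceding Proposition \ref{pro2} that for any open set $O\subseteq\overline{\Lambda}$ the ideal $I_O=\{s\in C(\overline{\Lambda})\colon s(\omega)=0 \text{ for all } \omega\in \Omega\setminus O\}$ is the \emph{largest} ideal of $C(\overline{\Lambda})$ whose carrier is $O$, and that a band is completely determined by its (regularly open) carrier. Set $O:=\operatorname{int}(\overline{\car(B)})$; since $O$ is regularly open, $I_O$ is itself a band by Proposition \ref{pro2}. Both $\Phi(B)^{dd}$ and $I_O$ are bands in $C(\overline{\Lambda})$ with the same carrier $O$, so they coincide. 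Therefore $\Phi(B)^{dd}=I_O$ is the largest ideal in $C(\overline{\Lambda})$ with carrier $\operatorname{int}(\overline{\car(B)})$.

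There is no serious obstacle: the substance sits in Proposition \ref{prop3}, and the only point worth checking is that the disjoint complement in the statement is computed inside the vector lattice $C(\overline{\Lambda})$, which is exactly what makes $\Phi(B)^{dd}$ a band there and renders Proposition \ref{pro2} applicable.
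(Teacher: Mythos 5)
Your proof is correct and follows the same route as the paper: the paper likewise notes that $\Phi(B)^{dd}$ is a band in $C(\overline{\Lambda})$, hence has regularly open carrier, and then applies Proposition \ref{prop3} to identify the closures. Your explicit treatment of the maximality claim via the identification $\Phi(B)^{dd}=I_O$ with $O=\operatorname{int}(\overline{\car(B)})$ is a correct spelling-out of what the paper leaves implicit from the facts recalled in Section \ref{sec1}.
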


\begin{remark}
$\car(\Phi(B)^{dd})$ is the smallest regularly open set containing $\car(B)$.
Indeed, $\operatorname{int}(\overline{\car(B)})$ is a regularly open set, $\car(B)\subseteq \overline{\car(B)}$ and $\car(B)$ is open, so $\car(B)\subseteq\operatorname{int}(\overline{\car(B)})$. Hence $\operatorname{int}(\overline{\car(B)})$ is a regularly open set containing $\car(B)$. Let $O$ be any regularly open set with $O\supseteq\car(B)$. Then $\overline{O}\supseteq \overline{\car(B)}$, so $O=\operatorname{int}( \overline{O}) \supseteq \operatorname{int}(\overline{\car(B)})$. So $\car(\Phi(B)^{dd})=\operatorname{int}(\overline{\car(B)})$ is the smallest regularly open set containing $\car(B)$.

In Example \ref{exa:namioka}, $\car(\Phi(B)^{dd})=(0,1]$ is indeed the smallest regularly open set in $\overline{\Lambda}=[-1,1]$ containing $\car(B)=(0,1)$. 
\end{remark}

For a band $B$ in $X$, we address the question whether the carrier of $B$ can be recovered if the carrier of $\Phi(B)^{dd}$ is known. 

Observe that every saturated subset of $\Lbar$ is w*-closed. On the contrary, a w*-closed subset of $\Lbar$ is not saturated, in general
(note that in Example \ref{ex.bands} the set $\{f_1,f_2,f_3\}$ is w*-closed but not saturated). Nevertheless, a statement similar to Proposition \ref{prop3} is true, concerning the saturations of $\car(B)$ and $\car(\Phi(B)^{dd})$.
\begin{proposition}\label{prop27}
For a band $B$ in $X$, one has 
\[\sat(\car(B))=\sat(\car(\Phi(B)^{dd})).\]
\end{proposition}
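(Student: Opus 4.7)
The plan is to deduce the equality directly from Proposition \ref{prop3}, which tells us that the closures of the two carriers coincide. Since $\sat(M)=\Lbar\cap\overline{\aff(M)}$ by Corollary \ref{cor6}, what we really need is that the closed affine hull of a set depends only on the closure of the set.

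First I would record the one easy inclusion. The extension relation $\Phi(B)\subseteq \Phi(B)^{dd}$ forces $\car(B)\subseteq \car(\Phi(B)^{dd})$, and monotonicity of $\sat$ (which is immediate from monotonicity of $\aff$ and of closure) then gives $\sat(\car(B))\subseteq \sat(\car(\Phi(B)^{dd}))$.

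For the reverse inclusion, the main point is the observation that for any subset $M$ of a topological vector space, $\overline{\aff(M)}=\overline{\aff(\overline{M})}$. One direction is monotonicity. For the other, the closure of an affine subspace in a topological vector space is again an affine subspace, so $\overline{\aff(M)}$ is an affine set containing $M$, hence containing $\overline{M}$, hence containing $\aff(\overline{M})$, which yields $\aff(\overline{M})\subseteq\overline{\aff(M)}$ and then $\overline{\aff(\overline{M})}\subseteq\overline{\aff(M)}$. Applying this to $M=\car(B)$ and to $M=\car(\Phi(B)^{dd})$, and invoking Proposition \ref{prop3} to equate $\overline{\car(B)}=\overline{\car(\Phi(B)^{dd})}$, we get
\[\overline{\aff(\car(B))}=\overline{\aff(\overline{\car(B)})}=\overline{\aff(\overline{\car(\Phi(B)^{dd})})}=\overline{\aff(\car(\Phi(B)^{dd}))}.\]
Intersecting with $\Lbar$ and using Corollary \ref{cor6} concludes the proof.

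The only step that might warrant extra care is the fact that the closure of an affine subspace is affine; in a general topological vector space this follows from the continuity of the map $(x,y,\lambda)\mapsto x+\lambda(y-x)$ together with the fact that an affine subspace is the translate of a linear subspace, whose closure is a linear subspace. Everything else is routine monotonicity, so no real obstacle is expected.
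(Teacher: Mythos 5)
Your proof is correct and follows essentially the same route as the paper: both reduce the statement to the observation that the saturation of a set depends only on its closure (because the closure of a linear/affine subspace is again such a subspace, by continuity of the vector space operations) and then invoke Proposition \ref{prop3}. The only cosmetic difference is that you phrase the key step via the affine hull and Corollary \ref{cor6}, while the paper uses the linear span directly.
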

\begin{proof}
First observe that for sets $M,N\subseteq\overline{\Lambda}$ with $\overline{M}=\overline{N}$ it follows that $\sat(M)=\sat(N)$.
Indeed, the continuity of the vector space operations yields $\operatorname{span}\left(\overline{M}\right)\subseteq\overline{\operatorname{span}(M)}$, hence $\overline{\operatorname{span}\left(\overline{M}\right)}=\overline{\operatorname{span}(M)}$, which implies $\overline{\operatorname{span}(M)}=\overline{\operatorname{span}(N)}$.

Now the statement follows immediately from Proposition \ref{prop3}.
%
%
\end{proof}
Next we reconstruct $\car(B)$ provided $\car(\Phi(B)^{dd})$ is known.
\begin{proposition}
For a band $B$ in $X$, one has
\[\car(B)=(\sat((\sat(\car(\Phi(B)^{dd})))^c))^c.\]
\end{proposition}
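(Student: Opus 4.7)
The plan is to unwind the right-hand side by using only two earlier ingredients: the characterization of bands via bisaturated sets (Theorem \ref{mainthm}) and the equality $\sat(\car(B))=\sat(\car(\Phi(B)^{dd}))$ (Proposition \ref{prop27}). The key observation is that $\car(B) = N(B)^c$, so if one passes to complements, the claimed identity is equivalent to
\[
N(B) = \sat\bigl((\sat(\car(\Phi(B)^{dd})))^c\bigr),
\]
which brings the statement into a form that matches the definition of ``bisaturated'' almost verbatim.

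First I would apply Proposition \ref{prop27}, which directly replaces $\sat(\car(\Phi(B)^{dd}))$ by $\sat(\car(B))$. Since $\car(B) = N(B)^c$, the right-hand side becomes $\sat((\sat(N(B)^c))^c)$. Second, since $B$ is a band, Theorem \ref{mainthm} (together with the hypothesis $B = Z(N(B))$, which holds by Proposition \ref{prop2}) tells us that $N(B)$ is bisaturated, i.e.,
\[
N(B) = \sat\bigl((\sat(N(B)^c))^c\bigr)
\]
by the reformulation of Definition \ref{def.bisat} recorded right after it. Combining these two equalities gives $N(B) = \sat((\sat(\car(\Phi(B)^{dd})))^c)$. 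Finally, taking complements in $\Lbar$ yields $\car(B) = (\sat((\sat(\car(\Phi(B)^{dd})))^c))^c$, which is the claim.

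There is essentially no obstacle here: all the work was done in Proposition \ref{prop27} and in the equivalence between bands and bisaturated complements. The only thing to be mildly careful about is keeping track of whether complements are taken in $\Lbar$ (the convention $M^c = \Lbar\setminus M$) at each stage, so that the two nested applications of $(\,\cdot\,)^c$ and $\sat$ line up exactly with the bisaturation identity.
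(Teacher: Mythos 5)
Your proof is correct and follows essentially the same route as the paper: both invoke Theorem \ref{mainthm} to see that $N(B)=(\car(B))^c$ is bisaturated, then use Proposition \ref{prop27} to replace $\sat(\car(B))$ by $\sat(\car(\Phi(B)^{dd}))$ inside the bisaturation identity, and finish by taking complements. Your explicit appeal to Proposition \ref{prop2} to verify the hypothesis $B=Z(N(B))$ of Theorem \ref{mainthm} is a small point the paper leaves implicit, but otherwise the arguments coincide.
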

\begin{proof}
Due to Theorem \ref{mainthm}, $(\car(B))^c$ is bisaturated.
By Proposition \ref{prop27} we obtain 
\[(\car(B))^c=\sat((\sat(\car(B)))^c)=\sat((\sat(\car(\Phi(B)^{dd})))^c),\]
which implies the assertion.
\end{proof}
 
Summing up the above results, the extension method (II) is such that $\car(B)$ and $\car(\Phi(B)^{dd})$ have the same closure and the same saturation, and if one of the two sets is known then the other one is determined.

Where procedure (II) yields the smallest band $\widehat{B}$ in $Y$
extending a band $B$ in $X$, there is no largest one, in general, as the next example shows.

\begin{example} 
Let $S$ be the unit circle in $\R^2$ and let $X$ be the space of restrictions to $S$ of affine functions from $\R^2$ to $\R$ with pointwise order on $S$. $C(S)$ is a vector lattice cover of $X$ \cite[Example 2.8]{gaans2003}. Let $O_1:=\{(a,b)\in S \colon a>0, \, b>0\}$ and $O_2:=\{(a,b)\in S \colon a<0, \, b<0\}$. Then $O_1$ and $O_2$ are regularly open subsets of $S$, therefore $I_{O_1}$ and $I_{O_2}$ are bands in $C(S)$ by Proposition \ref{pro2}.
Affine functions in $I_{O_1}$ or $I_{O_2}$ are identically  zero, hence 
both $I_{O_1}$ and $I_{O_2}$ extend the band $\{0\}$ in $X$. The only band in $C(S)$ larger than $I_{O_1}$ and $I_{O_2}$ is $C(S)$, which does not extend $\{0\}\subseteq X$. 
\end{example}

\section{Bands in finite dimensional spaces}\label{sec4}
 In this section we shall see how to use the characterization of bands in terms of bisaturated sets of $\overline{\Lambda}$, given in Theorem \ref{mainthm}, to analyze the following question. 
 \begin{question}
Does there exist an a-priori upper bound for the number of bands in a finite dimensional  
partially ordered vector space $X$ with a closed and generating cone $K$ in terms of the dimension of $X$? If so, what is the optimal upper bound? 
 \end{question}
Clearly the number of bands in $n$-dimensional partially ordered vector space is $2$ if $n=1$. For $n\geq 2$ we have the following result. 
\begin{theorem}\label{thm:numbands}
The number of bands in an $n$-dimensional partially ordered vector space $(X,K)$, where is $K$ is a closed generating cone and $n\geq 2$, does not exceed
\[
\textstyle\frac{1}{4}2^{2^{n}}.
\] 
\end{theorem}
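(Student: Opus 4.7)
The plan is to use the bijection between bands in $X$ and bisaturated subsets of $\overline{\Lambda}$ established in Theorem \ref{mainthm}, handling polyhedral cones first and then extending by approximation.

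For the polyhedral case, $K$ polyhedral makes $K'$ polyhedral, so $\Sigma$ is a polytope and its extreme point set $\Lambda=\overline{\Lambda}$ is finite. By Proposition \ref{prop2} combined with Theorem \ref{mainthm}, each band $B$ of $X$ is uniquely encoded by the bisaturated subset $N(B)\subseteq\overline{\Lambda}$, so it suffices to bound the number of such subsets. By Lemma \ref{lemma.UB}, bisaturated subsets are in bijection with pairs of affine subspaces $(S_1,S_2)$ of $X'$ satisfying $\overline{\Lambda}\subseteq S_1\cup S_2$ and the symmetry conditions $S_i=\overline{\operatorname{aff}(\overline{\Lambda}\setminus S_{3-i})}$, with $N(B)=\overline{\Lambda}\cap S_1$. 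Since $\overline{\Lambda}$ lies in the $(n-1)$-dimensional affine hyperplane $\{\varphi\in X':\varphi(u)=1\}$, both $S_i$ can be taken inside this hyperplane, so the combinatorics reduces to affine geometry in dimension $n-1$.

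The combinatorial heart is to bound the number of valid pairs $(S_1,S_2)$ by $\tfrac14 2^{2^n}$ independently of $|\overline{\Lambda}|$. I would classify elements of $\overline{\Lambda}$ by their incidence pattern relative to a finite family of canonical affine subspaces of $X'$ arising from the polyhedral structure of $K'$, expecting at most $2^n$ such incidence classes because of the $n$-dimensionality of $X'$ (each class being determined by the flags of faces of $K$ in which the element is supported). Each bisaturated pair is then encoded by an assignment of each class to $M_1$, $M_2$, or both, yielding at most $2^{2^n}$ possibilities; the factor $\tfrac14$ would come from quotienting by the ordered/unordered swap $(M_1,M_2)\leftrightarrow (M_2,M_1)$ together with the two trivial bands $\{0\}$ and $X$. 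Once this polyhedral bound is in place, one extends to arbitrary closed generating cones by plastering: approximate $K$ by polyhedral cones $K_j$ (with $K_j\downarrow K$ in Hausdorff distance on a cross-section by the order unit), show that for any band $B$ of $(X,K)$ the disjointness witnesses in $(X,K_j)$ eventually stabilize, and conclude that distinct bands in $(X,K)$ yield distinct bands in $(X,K_j)$ for large $j$, so the uniform polyhedral bound transfers.

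The main obstacle is the combinatorial step in the polyhedral case, since $|\overline{\Lambda}|$ can grow arbitrarily with the number of facets of $K$. The bound cannot come from naive subset counting, so the symmetry condition in Lemma \ref{lemma.UB} must be used in an essential way to show that the pair $(S_1,S_2)$ is controlled solely by the dimension $n$. Making the informal notion of "incidence class'' precise, so that it yields exactly the $2^n$ types and so that the bisaturation symmetry enforces consistency of the assignment, is the delicate part I would expect to occupy most of the proof, together with verifying that plastering preserves (and does not create spurious) bands in the limit.
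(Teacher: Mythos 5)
Your overall architecture matches the paper's: reduce to the polyhedral case via Theorem \ref{mainthm} (bands correspond to bisaturated subsets of $\overline{\Lambda}$), then transfer the bound to general closed generating cones by a plastering argument. However, the combinatorial heart of the proof --- which you yourself flag as the main obstacle --- is left unproven, and the sketch you give for it does not work. Your plan is to partition $\overline{\Lambda}$ into at most $2^n$ ``incidence classes'' and encode each bisaturated pair by an assignment of classes, with the factor $\tfrac14$ coming from the swap $(M_1,M_2)\leftrightarrow(M_2,M_1)$ and the two trivial bands. This is not a valid counting scheme: bands are in bijection with bisaturated \emph{sets}, not with unordered pairs, so quotienting by the swap undercounts (a bisaturated set and its partner $\sat(M^c)$ are in general two distinct bands), and the two trivial bands do not supply another factor of $2$. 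Moreover, the existence and the cardinality bound $2^n$ for your incidence classes is asserted, not established. The paper's actual mechanism is different and more concrete: it proves (Theorem \ref{polytopes}, Claim 1) that one may assume the vertex set $V$ satisfies $|V\cap U|\le 2^{k+1}-2$ for every $k$-dimensional affine subspace $U$, hence $|V|\le 2^n-2$, by a minimality argument --- if some affine subspace contains too many vertices, delete one vertex $u$ and show that $S\mapsto S\setminus\{u\}$ maps bisaturated sets injectively to bisaturated sets of the smaller polytope. The bound $\tfrac14 2^{2^n}=2^{2^n-2}$ is then just the number of subsets of a set with $2^n-2$ elements. Without Claim 1 (or a genuine substitute), your count is unbounded in $|\overline{\Lambda}|$, i.e., in the number of facets of $K$.

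The plastering step is also only gestured at. The paper does not take a limit $K_j\downarrow K$ and argue that ``disjointness witnesses stabilize''; instead, for any prescribed finite list of bands $B_1,\dots,B_q$ of $(X,K)$ it explicitly constructs one polyhedral cone $L\supseteq K$ (generated by finitely many extreme functionals chosen inside $\car(B_i)\cap\Lambda$ and $\car(B_i^d)\cap\Lambda$ with the correct affine hulls, via Lemmas \ref{lemma.no-bar-needed} and \ref{lemma.bipairs-restricted}) such that every $B_i$ is still a band of $(X,L)$. Your stabilization claim is essentially the conclusion of Lemma \ref{plasteringlemma} restated without proof; as written it is a gap, since nothing in the proposal shows that a band of $(X,K)$ remains a band (with the same disjoint complement structure) for the approximating polyhedral cones.
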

 The proof of this result consists of two parts. First it will be shown that the upper bound holds for partially ordered vector spaces with polyhedral cones. Subsequently we use  the Plastering Lemma \ref{plasteringlemma} to show that the same upper bound holds for general cones. 

Let $X$ be a finite dimensional vector space. For a subset $M\subseteq X$ we mean by $\dim M$ the dimension of $\aff M$.
Recall that a closed cone $K$ in $X$ is {\em polyhedral} if it is the intersection of finitely many closed half-spaces. We use basic results in polyhedral geometry, which can be found, e.g., in \cite{Ziegler}. If $K$ is a closed polyhedral cone in $X$ with non-empty interior, then each $u$ in the interior of $K$ is an order unit. Fix such a $u$. Then there exist $m\geq \dim X$ functionals, $\phi_1,\ldots, \phi_m\in X'$ with $\phi_i(u)=1$ for all $i$, so that 
 \begin{equation}\label{f.polycone}
K=\{x\in X\colon \phi_i(x)\geq 0\mbox{ for }i=1,\ldots,m\}.
 \end{equation}
The functionals $\phi_i$ define the facets of $K$. Recall that  a {\em facet} of a polyhedral cone $K$ is an (exposed) face $F=\{x\in K\colon \psi(x)=0\}$, where $\psi\in K'$, such that $\dim F =\dim K-1=\dim X-1$.  Observe that 
\[
 K'=\left\{\phi\in X'\colon \phi(x)\geq 0\mbox{ for all }x\in K\right\}=\left\{\mbox{$\sum_i \alpha_i\phi_i$}\colon \alpha_i\geq 0\mbox{ for all }i\right\}.
 \]
Indeed,  $\{\sum_i \alpha_i\phi_i\colon \alpha_i\geq 0\mbox{ for all }i\}\subseteq K'$. Now if there exists $\psi\in K'$ with $\psi\not\in \{\sum_i \alpha_i\phi_i\colon \alpha_i\geq 0\mbox{ for all }i\}$, then we can use the Hahn-Banach separation theorem to find $z\in X''=X$ such that $\psi(z)<0$ and $\phi_i(z)\geq 0$ for all $i$. As $K=\{x\in X\colon \phi_i(x)\geq 0\mbox{ for all }i\}$, we see that $z\in K$, which is impossible, as $\psi\in K'$ and $\psi(z)<0$. 

Let $\Lambda$ be the set of extreme points  of 
$\Sigma :=\{\phi\in K'\colon \phi(u)=1\}$. We claim that $\Lambda =\{\phi_1,\ldots,\phi_m\}$, which is a closed set. Indeed, if there exist $\sigma$ and $\rho$ in $\Sigma$ such that $\lambda \sigma +(1-\lambda)\rho = \phi_i$ for $0<\lambda<1$ and some $i$, then $F=\{x\in K\colon \phi_i(x)=0\}$ is contained in $\ker \sigma\cap \ker \rho$. As $\dim F = \dim X-1$, we find that $\sigma =\alpha \phi_i$ and $\rho=\beta \phi_i$ for some 
$\alpha,\beta\in\mathbb{R}$. It now follows that $\sigma=\phi_i=\rho$, as $\sigma(u)=\phi_i(u)=\rho(u)$.

The space $X$ with polyhedral cone $K$ as defined above fits in the setting of the functional representation given in Section \ref{sec1}.  As $K$ is closed,  $X$ is Archimedean. By Theorem \ref{thm:main},  
\[
\Phi\colon(\RR^n,K)\rightarrow C(\Lambda),\quad
x\mapsto (\phi_1(x),\ldots,\phi_m(x)), 
\]
is bipositive and 
$\Phi(\RR^n)$
is order dense in $C(\Lambda)$, where $C({\Lambda})$ is identified with $\RR^m$ ordered by the standard cone $\mathbb{R}^m_+$. By
\cite[Proposition 13]{KalLemGaa2010}, $(\mathbb{R}^m,\Phi)$ is even the Riesz completion of $X$.

Observe that for a set $M\subseteq \Lambda$
\[\sat(M)=\Lambda\cap\aff (M).\]
\begin{remark}\label{rem.indexed}
Since every subset of $\Lambda$ is regularly open, we know that
 $\widehat{B}$ is a band in the vector lattice $\RR^m$ ordered by the standard cone if and only if there exists $N\subseteq \{1,\ldots,m\}$ such that 
\begin{equation}
\widehat{B}=\left\{(x_1,\ldots,x_m)^T\colon x_i=0 \mbox{ for all } i\in N\right\}.
\end{equation}
Moreover, by Theorem \ref{th:proc2}, a band $B$ in $X$ satisfies 
$\car(B)=\car(\Phi(B)^{dd})$.
\end{remark}

Next we state our main result on the number of bands in spaces with polyhedral cones.

\begin{theorem}\label{polybands}
The number of bands in an $n$-dimensional partially ordered vector space $X$ ordered by a generating polyhedral cone $K$ does not exceed
$\frac{1}{4}2^{2^{n}}$, provided $n\ge 2$. 
\end{theorem}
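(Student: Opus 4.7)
My plan is to use Theorem \ref{mainthm} to identify bands in $X$ with bisaturated subsets of $\Lambda$, then bound the number of such subsets combinatorially, exploiting the polyhedral structure.

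In the polyhedral case $\Lambda = \{\phi_1,\dots,\phi_m\}$ is a finite set with $m \ge n$, lying in the $(n-1)$-dimensional affine hyperplane $H = \{\phi \in X' \colon \phi(u)=1\} \subseteq X' \cong \mathbb{R}^n$. Since $\Lambda$ is discrete, every subset is closed, so Corollary \ref{cor6} reduces the saturation to $\sat(M) = \Lambda \cap \aff(M)$ for every $M \subseteq \Lambda$. By Theorem \ref{mainthm} the bands in $X$ are in bijection with the bisaturated subsets of $\Lambda$.

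First I would invoke Lemma \ref{lemma.UB}(d) to parametrise bisaturated subsets geometrically: each bisaturated pair $\{M_1, M_2\}$ in $\Lambda$ corresponds uniquely to a pair of affine subspaces $(S_1,S_2)$ of $H$, each of dimension at most $n-1$, satisfying $\Lambda \subseteq S_1 \cup S_2$, the self-referential condition $S_i = \aff(\Lambda \setminus S_j)$ for $\{i,j\} = \{1,2\}$, and $M_i = \Lambda \cap S_i$. Each admissible pair contributes exactly two bisaturated subsets, so counting bands reduces to counting admissible pairs of affine subspaces, up to the factor $2$.

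The main combinatorial step is then to bound the number of admissible pairs by $\tfrac{1}{8}\, 2^{2^n}$ uniformly in $m$. Each affine subspace $S_i \subseteq H$ is dually described by a linear subspace of $X \cong \mathbb{R}^n$ (via annihilation of its direction), and the idea is to encode each admissible pair $(S_1,S_2)$ by a suitable Boolean datum indexed by the $2^n$ subsets of a fixed basis of $X$ (for example, recording for each subset of the basis whether its span meets $S_1$ or $S_2$). The self-referential condition $S_i = \aff(\Lambda \setminus S_j)$ makes this encoding injective, and the redundancy factor $8$ accounts for unordered versus ordered pairs, the trivial pair $(\emptyset, \Lambda)$, and further symmetries that identify equivalent encodings.

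The main obstacle I anticipate is constructing a combinatorial encoding of admissible pairs into a universe of size $2^{2^n}$ that is valid uniformly in $m$. The difficulty is precisely that $|\Lambda|$ is unbounded in terms of $n$, while the bound must depend on $n$ alone, so the encoding must force all relevant geometric data into the fixed $n$-dimensional ambient space $X$. Making the self-referential linking of $S_1$ and $S_2$ compatible with the encoding, and tracking the redundancy carefully to extract the exact factor $\tfrac{1}{4}$, is the delicate heart of the proof.
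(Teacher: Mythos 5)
Your reduction is sound and matches the paper's first step: by Theorem \ref{mainthm} the bands correspond bijectively to the bisaturated subsets of the finite set $\Lambda=\{\phi_1,\ldots,\phi_m\}$, and since every subset of a finite set is closed the saturation is $\sat(M)=\Lambda\cap\aff(M)$. The genuine gap is everything after that. The entire content of the theorem is the uniform-in-$m$ combinatorial bound, and you do not prove it: you propose to encode each admissible pair $(S_1,S_2)$ of affine subspaces by ``recording for each subset of the basis whether its span meets $S_1$ or $S_2$,'' but this invariant is not even well defined (every pair of linear subspaces meets in $0$; a generic affine subspace meets coordinate subspaces in no combinatorially meaningful pattern), and there is no reason for it to be injective on admissible pairs --- two different affine subspaces can certainly have identical incidence patterns with the $2^n$ coordinate spans. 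You yourself flag this encoding as the ``delicate heart'' and the ``main obstacle,'' which is an accurate self-assessment: the proposal stops exactly where the proof has to begin.

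For contrast, the paper's route to the bound is quite different and much more concrete. It takes a polytope $P$ whose vertex set $V$ realizes $q$ bisaturated sets with $|V|$ minimal, and proves (Claim 1) that every $k$-dimensional affine subspace $U$ satisfies $|V\cap U|\le 2^{k+1}-2$; taking $k=n-1$ gives $|V|\le 2^n-2$, whence the number of bisaturated sets is at most $2^{|V|}\le 2^{2^n-2}=\frac14 2^{2^n}$. So the factor $\frac14$ comes from the $-2$ in the cardinality bound on $V$, not from any pairing or symmetry bookkeeping. Claim 1 is proved by a vertex-deletion argument: if some $U$ contained too many vertices, one could delete a vertex $u\in V\cap U$ and show (using a secondary claim that any $W\subseteq V$ with $|W\cap U|>2^k-2$ has $U\subseteq\aff W$) that every bisaturated set of $V$ restricts to a bisaturated set of $V\setminus\{u\}$ without collisions, contradicting minimality. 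If you want to complete your proof along your own lines, you would need to replace the Boolean encoding by an actual injection into a set of size $\frac18 2^{2^n}$, and nothing in your sketch suggests how to obtain one; the paper's strategy of bounding $|V|$ itself is the device that makes the count independent of $m$.
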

By using Theorem \ref{mainthm}, the proof of Theorem \ref{polybands} reduces to a purely combinatorial problem concerning polytopes. Indeed, note that in the setting above, the set  $\Sigma$ is a closed convex polytope in the $(n-1)$-dimensional affine hyperplane $H=\{\phi\in X'\colon \phi(u)=1\}$ with vertex set $V=\{\phi_1,\ldots,\phi_m\}$. From  Theorem \ref{mainthm} we know that $B$ is a band in $X$ if and only if $N(B)$ is a bisaturated set in $\Lambda$. Therefore the number of bands equals the number of bisaturated sets.

Given a  finite set $V$ in $X$, Definition \ref{def.bisat} entails that a set $S\subseteq V$ is bisaturated in $V$ if and only if 
\[
S = V\cap\aff (V\setminus(V\cap\aff(V\setminus S))), 
\]
and $\{S,T\}$ is a  bisaturated pair in $V$ if and only if
\[
S=V\cap\aff(V\setminus T)\mbox{\quad and \quad}T=V\cap\aff(V\setminus S). 
\]
Thus, to prove Theorem \ref{polybands} it suffices to show the following combinatorial statement. 
\begin{theorem}\label{polytopes}
In a vertex set $V$ of a polytope $P$ in $\mathbb{R}^{n-1}$, with $n\geq 2$, there are at most $\frac{1}{4}2^{2^{n}}$ bisaturated sets. 
\end{theorem}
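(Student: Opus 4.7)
The plan is to go from bisaturated sets to bisaturated pairs, then to pairs of affine subspaces via Lemma~\ref{lemma.UB}, and finally to bound the number of such pairs combinatorially.

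First I would show that, for $V\neq\emptyset$, each bisaturated set $M$ has a unique partner $M'=\sat(V\setminus M)$ with $M\neq M'$: the equality $M=M'$ would force $M\cup M=V$ by~\eqref{M1M2isall}, hence $M=V$, but then $M'=\sat(\emptyset)=\emptyset\neq V$, a contradiction. So the number of bisaturated sets equals exactly twice the number of unordered bisaturated pairs. The case $V=\emptyset$ is trivial and well within the bound.

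By Lemma~\ref{lemma.UB}(d), each bisaturated pair $\{M_1,M_2\}$ is in bijection with an unordered pair $\{S_1,S_2\}$ of affine subspaces of $\mathbb{R}^{n-1}$ satisfying $V\subseteq S_1\cup S_2$, $S_i=\aff(V\setminus S_{3-i})$, and $M_i=V\cap S_i$. Thus the theorem reduces to bounding the number of such pairs of affine subspaces by $\frac{1}{8}2^{2^n}$.

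For the combinatorial bound, I would fix an affine basis $B=\{b_1,\ldots,b_n\}\subseteq V$ of $\mathbb{R}^{n-1}$ (which exists because $P$ is full-dimensional in $\mathbb{R}^{n-1}$), and for each $I\subseteq\{1,\ldots,n\}$ let $F_I=\aff\{b_i\colon i\in I\}$. Using the standard fact that an affine subspace over $\mathbb{R}$ is never a union of finitely many proper affine subspaces, one can associate to each bisaturated pair $\{S_1,S_2\}$ a combinatorial invariant that records, for each non-trivial $I\subsetneq\{1,\ldots,n\}$, how the coordinate subspace $F_I$ sits in the pair (contained in $S_1$ only, in $S_2$ only, in both, or in neither). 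Since there are $2^n-2$ non-trivial $I$, a careful analysis using the duality condition $S_i=\aff(V\setminus S_{3-i})$ should show that the effective invariant takes values in a set of size at most $2^{2^n-2}$, giving the claimed bound $\frac{1}{4}2^{2^n}$ on the number of bisaturated sets after accounting for the factor of two from passing between sets and pairs.

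The main obstacle is the last step: verifying that the combinatorial invariant is in fact injective and, in particular, that the information about the points $v\in V\setminus B$ is already determined by the coordinate-subspace data together with the duality. This requires using the fact that each $S_i$ equals the affine hull of its trace on $V$, combined with the bisaturation condition, to show that each bisaturated pair is fully reconstructible from its interaction with the coordinate frame defined by $B$.
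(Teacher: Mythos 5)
Your two preliminary reductions are sound: each bisaturated set lies in a unique bisaturated pair with a distinct partner (so the count of bisaturated sets is twice the count of unordered pairs), and Lemma~\ref{lemma.UB}(d) does identify pairs with pairs of affine subspaces $S_1,S_2$ satisfying $V\subseteq S_1\cup S_2$ and $S_i=\aff(V\setminus S_{3-i})$. But these steps are bookkeeping; the whole content of the theorem is the final count, and there you have a genuine gap that you yourself flag. Two concrete problems. First, the invariant you describe assigns to each of the $2^n-2$ nontrivial $I$ one of \emph{four} states, so a priori it takes $4^{2^n-2}=2^{2^{n+1}-4}$ values, far above the target; the asserted reduction to $2^{2^n-2}$ values is not justified, and the state ``contained in neither'' really does occur (for a quadrilateral in $\mathbb{R}^2$ with $B=\{v_1,v_2,v_3\}$ and the pair $S_1=\aff\{v_1,v_3\}$, $S_2=\aff\{v_2,v_4\}$, the line $F_{\{1,2\}}$ lies in neither $S_i$), so you cannot simply collapse to two states per $I$. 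There is also an arithmetic slip: even $2^{2^n-2}$ values for unordered pairs would only give $2\cdot 2^{2^n-2}=\frac12 2^{2^n}$ bisaturated sets; you would need $2^{2^n-3}$ values per unordered pair. Second, and more seriously, injectivity is doubtful: $S_i=\aff(V\setminus S_{3-i})$ is determined by \emph{all} of $V$, while your invariant only records the interaction with the $n$ frame points $b_1,\ldots,b_n$; nothing in your argument controls the vertices in $V\setminus B$ or bounds $|V|$, so two distinct pairs could in principle agree on the frame and differ elsewhere. You would have to prove that the trace of a bisaturated pair on an affine basis determines the pair, and that is not obviously true.

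The paper's proof attacks exactly the quantity your approach tries to avoid: it bounds $|V|$ itself. Taking a polytope with $q$ bisaturated sets and a minimal number of vertices, it proves (Claim~1) that every $k$-dimensional affine subspace meets $V$ in at most $2^{k+1}-2$ points, by showing that if some subspace $U$ were over-populated one could delete a vertex $u\in V\cap U$ and the smaller polytope would still have at least $q$ bisaturated sets (each bisaturated pair $\{S,T\}$ of $V$ passes to the bisaturated pair $\{S\setminus\{u\},T\setminus\{u\}\}$ of $V\setminus\{u\}$, and this passage does not conflate distinct bisaturated sets), contradicting minimality. With $k=n-1$ this gives $|V|\le 2^n-2$, after which the bound $2^{2^n-2}=\frac14 2^{2^n}$ is immediate by counting all subsets of $V$. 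If you want to salvage your route, the missing ingredient is precisely such a bound on $|V|$, or an equivalent proof that a bisaturated pair is reconstructible from boundedly much data; without it the frame invariant cannot be shown injective.
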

\begin{proof}
Suppose that there exists a polytope $P$ in $\mathbb{R}^{n-1}$ with $q$ bisaturated sets in the vertex set $V$ of $P$.  We wish to show that $q\leq \frac{1}{4}2^{2^n}$. Without loss of generality we may assume that $P$ is chosen such that the number of its vertices is minimal among the cardinalities of the vertex sets of polytopes in $\mathbb{R}^{n-1}$ with at least $q$ bisaturated sets. \\
\mbox{}\\
{\em Claim 1:} For each $k$-dimensional affine subspace $U$ in $\mathbb{R}^{n-1}$, with $1\leq k\leq n-1$, we have that 
\[|V\cap U|\leq 2^{k +1}-2.\] 

Assuming the claim for the moment, we see that $V$ has at most $2^{n}-2$ points, so 
the number of bisaturated sets of $P$ cannot exceed $2^{2^{n}-2}$. So, it remains to prove the claim. Obviously, the claim is true for $k=1$, as no three vertices of $P$ lie 
on a line. 

For the sake of contradiction, assume that there exists a $k$-dimensional  affine subspace $U$, with $k\geq 2$, such that 
\[|U\cap V|>2^{k+1}-2.\] 
Now take 
$k\geq 2$  as small as possible. 
\\
\mbox{}\\
{\em Claim 2:}
For each $W\subseteq V$ with $|W\cap U|> 2^k -2$, we have that $U\subseteq \aff W$.

For a proof of Claim 2, observe that $U\not\subseteq \aff W$ implies that $U':=U\cap \aff W$ has dimension $l< k$ and $|U'\cap V|> 2^{k}-2\ge 2^{l+1}-2$, which would mean that $k$ is not minimal. 

We continue the proof of Claim 1 and fix $u\in V\cap U$. To arrive at a contradiction we show that the polytope $P'$ with vertex set $V'=V\setminus\{u\}$ has the property that  if $S$ is a bisaturated set in $V$, then $S'=S\setminus\{u\}$ is a bisaturated set in  $V'$.  Moreover, $V'$ contains at least as many bisaturated sets as $V$. The last statement contradicts the minimality assumption on the number of  vertices of $P$.

Let $\{S,T\}$ be a bisaturated pair of $V$, and write $S'=S\setminus\{u\}$ and $T'=T\setminus\{u\}$. Then since $S\cup T=V$, either $|S\cap U|>2^k -1$ or $|T\cap U|>2^k-1$. Without loss of generality we may assume  that $|S\cap U|> 2^k-1$. Then $U\cap V\subseteq S$, as $U\subseteq \aff S$ (by Claim 2) and $V\cap \aff S=S$, and hence $u\in S$. 
This implies that $V\setminus S=V'\setminus S'$; so,
\[
T' = T\setminus\{u\} =V\cap \aff(V\setminus S)\setminus \{u\}  =V'\cap \aff (V\setminus S)=V'\cap \aff (V'\setminus S'). 
\]
To show that $S' = V'\cap \aff (V'\setminus T')$ we distinguish two cases: $ u\in T$ and $u\not\in T$. 
If $u\in T$, then 
\[
S' = S\setminus\{u\} =   V'\cap   \aff (V\setminus T) = V'\cap \aff(V'\setminus T').
\]
On the other hand, if $u\not\in T$, then  $|T\cap U|\leq 2^k -2$. Indeed, $|T\cap U|>2^k -2$ by means of Claim 2 implies $U\subseteq \aff T$, so that  $U\cap V\subseteq V\cap \aff T=T$, which 
shows that $u\in T$. Thus $|(V\setminus T)\cap U|\ge 2^k$, which implies that $|(V'\setminus T')\cap U| > 2^k-2$. Again by Claim 2, it follows that  
$U\subseteq  \aff (V'\setminus T')$; in particular, $u\in \aff (V'\setminus T')$. So, 
\[
S' = S\setminus \{u\} =V'\cap \mathrm{aff}(V\setminus T) = V'\cap \mathrm{aff}(V'\setminus T').
\]
We conclude that $\{S',T'\}$ is a bisaturated pair of $P'$. 
 
It remains to show that $P'$ has at least as many bisaturated sets as $P$. 
Recall that for each bisaturated set $S$ of $P$, $S'=S\setminus \{u\}$ is a bisaturated set of $P'$. So, it suffices to show that if $S$ is a bisaturated set of $P$ and $u\in S$, then $S'$ is not a bisaturated set of $P$. Assume that both $S$ and $S'$ are bisaturated sets of $P$. 
Note that if $|S\cap U|>2^k-1$, then $|S'\cap U|> 2^k-2$. But this implies (by Claim 2) that $U\subseteq \aff(S')$, so $V\cap U\subseteq V\cap \aff S'=S'$, which is impossible, as $u\not\in S'$. 
Now assume that $|S\cap U|\leq 2^k-1$; so $|(V\setminus S)\cap U|\ge 2^k> 2^k-1$ and then (by Claim 2 for the last time) $U \subseteq \aff(V\setminus S)$. 
Let $T = V\cap \aff (V\setminus S)$ and $R =V\cap \aff (V\setminus S')$.
Obviously, if we can show that  $T=R$, then it follows that $S =S'$ and we are done. 
Note that, as $u\in \aff(V\setminus S)$, $\aff(V\setminus S) = \aff(V\setminus S')$,  
and hence $T=R$. 
\end{proof}

We next generalize the upper bound in Theorem \ref{polybands} for the polyhedral case to the general finite dimensional case. Let $X$ be a finite dimensional vector space and $K$ a generating closed cone in $X$. We denote by $(X,K)$ the partially ordered vector space $X$ ordered by $K$. We fix an interior point $u$ of $K$ and consider the corresponding functional representation $(\Phi,\Lbar)$ as in Section \ref{sec1}.
Given bands $B_1,B_2,\ldots,B_q$ in $(X,K)$, the idea now is to construct a  polyhedral cone $L$ in $X$ containing $K$ such that each $B_i$ is a band in $(X,L)$. The construction uses the characterisation of bands by means of bisaturated sets of 
$\Lbar$ and  the following  two technical lemmas.

If $\{M_1,M_2\}$ is a bisaturated pair in $\Lbar$ and we replace $M_1$ and $M_2$ by smaller closed sets $W_1$ and $W_2$ that have the same affine hulls as $M_1$ and $M_2$, respectively, then the next lemma says that the pair of their saturations in $W=W_1\cup W_2$ is a bisaturated pair in $W$.

\begin{lemma}\label{lemma.bipairs-restricted}
Let $\{M_1,M_2\}$  be a bisaturated pair in $ \Lbar$. If $W_1\subseteq \Lbar\setminus M_2$ and $W_2\subseteq \Lbar\setminus M_1$ are such that
\[\overline{\aff(W_i)}=\overline{\aff(M_i)},\mbox{\quad for }i=1,2,\]
and if $F$ is a closed subset of $\Lbar$ with $W_1\cup W_2\subseteq F$, then for the sets
\[N_i:=F\cap \overline{\aff(W_i)},\mbox{\quad with } i=1,2,\]
the pair $(N_1, N_2)$ is bisaturated in $F$.
\end{lemma}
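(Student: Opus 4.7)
The plan is to verify the two equalities
\[N_1=F\cap\overline{\operatorname{aff}(F\setminus N_2)}\quad\text{and}\quad N_2=F\cap\overline{\operatorname{aff}(F\setminus N_1)}\]
required by Definition \ref{def.bisat}; by symmetry it suffices to address the first one. Since the hypothesis $\overline{\operatorname{aff}(W_i)}=\overline{\operatorname{aff}(M_i)}$ allows one to freely swap $W_i$ for $M_i$ in the definition of $N_i$, I would rewrite $N_i=F\cap\overline{\operatorname{aff}(M_i)}$ throughout and work with $M_1,M_2$, which carry more structure (they are saturated in $\overline{\Lambda}$ and cover $\overline{\Lambda}$ by \eqref{M1M2isall}).

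For the inclusion $N_1\subseteq F\cap\overline{\operatorname{aff}(F\setminus N_2)}$, the key intermediate claim is $W_1\subseteq F\setminus N_2$. Since $W_1\subseteq F$ automatically, I only need $W_1\cap N_2=\emptyset$. If $w\in W_1\cap N_2$, then $w\in\overline{\Lambda}\cap\overline{\operatorname{aff}(M_2)}=\operatorname{sat}(M_2)=M_2$ (using Corollary \ref{cor6} and the fact that $M_2$, being part of a bisaturated pair, is saturated), contradicting $W_1\subseteq\overline{\Lambda}\setminus M_2$. Hence $W_1\subseteq F\setminus N_2$, so
\[N_1=F\cap\overline{\operatorname{aff}(W_1)}\subseteq F\cap\overline{\operatorname{aff}(F\setminus N_2)}.\]

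For the reverse inclusion $F\cap\overline{\operatorname{aff}(F\setminus N_2)}\subseteq N_1$, it suffices to prove $F\setminus N_2\subseteq\overline{\operatorname{aff}(M_1)}$. Pick any $x\in F\setminus N_2$. Then $x\in F\subseteq\overline{\Lambda}=M_1\cup M_2$ by \eqref{M1M2isall}. If $x\in M_2$, then $x\in\overline{\operatorname{aff}(M_2)}$, and combined with $x\in F$ this gives $x\in N_2$, a contradiction. Therefore $x\in M_1\subseteq\overline{\operatorname{aff}(M_1)}$. Taking the closed affine hull and intersecting with $F$ yields the claimed inclusion.

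Combining the two inclusions gives $N_1=F\cap\overline{\operatorname{aff}(F\setminus N_2)}$; the analogous argument with the roles of the indices swapped gives $N_2=F\cap\overline{\operatorname{aff}(F\setminus N_1)}$, so $\{N_1,N_2\}$ is a bisaturated pair in $F$. The only genuinely delicate point is the first inclusion, where one must remember that $M_2$ is saturated \emph{in $\overline{\Lambda}$} (not in $F$) in order to pull $w$ back from $\overline{\operatorname{aff}(M_2)}$ into $M_2$; everything else is a routine chasing of affine hulls.
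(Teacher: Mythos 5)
Your proof is correct and follows essentially the same route as the paper: a direct verification of the bisaturated-pair condition in $F$, resting on the same two facts, namely that $M_1,M_2$ are saturated in $\overline{\Lambda}$ and that $M_1\cup M_2=\overline{\Lambda}$. The paper organizes the computation as an equality of closed affine hulls, $\overline{\aff(F\setminus N_1)}=\overline{\aff(W_2)}$, via Lemma \ref{lemma.UB}(c), while you split it into two set inclusions, but the content is the same.
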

\begin{proof}
We have
\begin{align*}
&F\setminus N_1=F\setminus(F\cap\overline{\aff(W_1)})
 = F\cap(\Lbar\setminus \overline{\aff(W_1)})\\
&\qquad =F\cap(\Lbar\setminus(\Lbar\cap\overline{\aff(M_1)}))=F\cap(\Lbar\setminus M_1)=F\setminus M_1,
\end{align*}
as $M_1$ is saturated in $\Lbar$, so
\[\overline{\aff(F\setminus N_1)}=\overline{\aff(F\setminus M_1)}.\]
On the one hand, $F\setminus M_1\subseteq \Lbar\setminus M_1$; so,
\[\overline{\aff(F\setminus M_1)}\subseteq \overline{\aff(\Lbar\setminus M_1)}=\overline{\aff(M_2)}=\overline{\aff(W_2)},\]
by Lemma \ref{lemma.UB}. On the other hand,
\[F\setminus M_1=F\cap (\Lbar\setminus M_1)\supseteq F\cap W_2=W_2,\]
so that $\overline{\aff(F\setminus M_1)}\supseteq \overline{\aff(W_2)}$. Thus
\[\overline{\aff(F\setminus N_1)}=\overline{\aff(F\setminus M_1)}=\overline{\aff(W_2)}\]
and therefore
\[F\cap\overline{\aff(F\setminus N_1)}=F\cap\overline{\aff(W_2)}=N_2.\]
Similarly, $F\cap\overline{\aff(F\setminus N_2)}=N_1$.
\end{proof}

\begin{lemma}\label{lemma.no-bar-needed}
If $B$ is a subspace of a finite dimensional vector space, then
\[
\aff(\car(B))=\aff\{\phi\in\Lambda\colon \phi(x)\neq 0\mbox{ for some }x\in B\}.
\]
\end{lemma}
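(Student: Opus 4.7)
The plan is to unfold both sides as affine hulls of explicit subsets of $X'$ and then show they agree by a closure/approximation argument, using that in the finite dimensional space $X'$ every affine hull is automatically closed.

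Writing out definitions, the left-hand side is $\aff(A)$ where
\[
A=\{\varphi\in\overline{\Lambda}\colon \varphi(x)\neq 0\mbox{ for some }x\in B\},
\]
while the right-hand side is $\aff(A_0)$ with
\[
A_0=\{\phi\in\Lambda\colon \phi(x)\neq 0\mbox{ for some }x\in B\}=A\cap\Lambda.
\]
The inclusion $\aff(A_0)\subseteq\aff(A)$ is immediate from $A_0\subseteq A$, so the task is to prove $A\subseteq\aff(A_0)$, which will yield $\aff(A)\subseteq\aff(A_0)$.

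For the nontrivial inclusion, fix $\varphi\in A$ and choose $b\in B$ with $\varphi(b)\neq 0$. Since $\varphi\in\overline{\Lambda}$, there is a net $(\phi_\alpha)$ in $\Lambda$ converging in the w*-topology to $\varphi$; in particular $\phi_\alpha(b)\to\varphi(b)\neq 0$, so $\phi_\alpha(b)\neq 0$ eventually. Thus, eventually $\phi_\alpha\in A_0$, which shows that $\varphi$ lies in the w*-closure of $A_0$. Now the key observation is that $X$ is finite dimensional, so $X'$ is finite dimensional as well, and on a finite dimensional vector space every affine subspace (in particular $\aff(A_0)$) is closed. Hence $\overline{A_0}\subseteq\aff(A_0)$, and therefore $\varphi\in\aff(A_0)$, as required.

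There is no real obstacle here: the whole argument rests on the elementary fact that affine hulls in finite dimensions are closed, combined with the definition of $\overline{\Lambda}$ as the w*-closure of $\Lambda$. The only small point to be careful about is that the nonvanishing condition $\varphi(b)\neq 0$ is stable under approximation, which is what pulls the approximating net into $A_0$ rather than merely into $\Lambda$.
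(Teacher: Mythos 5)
Your proof is correct and follows essentially the same route as the paper: the easy inclusion comes from $\Lambda\subseteq\overline{\Lambda}$, and the nontrivial one is obtained by approximating a point of $\car(B)$ by a net in $\Lambda$, observing that the nonvanishing condition persists along the net, and then using that affine hulls in the finite dimensional space $X'$ are closed. No gaps.
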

\begin{proof}
Denote $C=\aff\{\phi\in\Lambda\colon \phi(x)\neq 0\mbox{ for some }x\in B\}$. 
Since $\Lambda\subseteq \Lbar$, $\aff(C)\subseteq\aff(\car(B))$. The affine subspace $\aff(C)$ is finite dimensional, hence closed, so it contains the closure of $C$. It remains to show that $\car(B)\subseteq \overline{C}$.

If $\phi\in\Lbar$ and $x\in B$ are such that $\phi(x)\neq 0$, then there are $(\phi_\alpha)_\alpha$ in $\Lambda$ with $\phi_\alpha\to\phi$ (in weak* topology) and then $\phi_\alpha(x)\neq 0$ for $\alpha$ large. Therefore $\phi_\alpha\in C$ for $\alpha$ large, so $\phi\in\overline{C}$. Thus $\aff(\car(B))\subseteq \aff(\overline{C})=\aff(C)$.
\end{proof}

Now we are in a position to present the plastering lemma.

\begin{lemma}\label{plasteringlemma}
Let $(X,K)$ be a finite dimensional partially ordered vector space with a closed generating cone $K$. For any choice of bands $B_1,\ldots,B_q$ in $(X,K)$, there exists a closed generating polyhedral cone $L$ in $X$ with $K\subseteq L$ such that $B_i$ is a band in $(X,L)$ for all  $i=1,\ldots,q$.
\end{lemma}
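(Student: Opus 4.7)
\emph{Plan.} The plan is to encode each band $B_i$ through its bisaturated pair $\{M_i^{(1)},M_i^{(2)}\}:=\{N(B_i),N(B_i^d)\}$ in $\overline{\Lambda}$ (Theorem~\ref{mainthm} combined with Proposition~\ref{bipairs-disco}), then extract from $\Lambda$ finitely many \emph{extreme} functionals whose affine hulls reproduce those of the sets $M_i^{(j)}$, and finally take $L$ to be the polyhedral cone whose dual cone is the convex cone generated by this finite collection. Lemma~\ref{lemma.bipairs-restricted} will transport every bisaturated pair to the new vertex set $\Lambda_L$, and Theorem~\ref{mainthm} applied to $(X,L)$ will promote each $B_i$ back to a band in $(X,L)$.

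\emph{Construction of $L$.} By Lemma~\ref{lemma.UB}(c) we have $\overline{\operatorname{aff}(M_i^{(j)})}=\overline{\operatorname{aff}(\overline{\Lambda}\setminus M_i^{(3-j)})}$, which by Lemma~\ref{lemma.no-bar-needed} equals the affine hull of $\Lambda\cap\operatorname{car}(B_i^d)$ when $j=1$ and of $\Lambda\cap\operatorname{car}(B_i)$ when $j=2$. Finite dimensionality makes every affine hull closed and allows me to pick finite subsets $W_i^{(1)}\subseteq\Lambda\cap(\overline{\Lambda}\setminus M_i^{(2)})$ and $W_i^{(2)}\subseteq\Lambda\cap(\overline{\Lambda}\setminus M_i^{(1)})$ with $\operatorname{aff}(W_i^{(j)})=\operatorname{aff}(M_i^{(j)})$ for all $i,j$. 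I put $F_0:=\bigcup_{i,j}W_i^{(j)}$ and enlarge $F_0$ inside $\Lambda$ to a finite set $F$ that linearly spans $X'$; this is possible because $K$ is closed and generating, so $K'$ has non-empty interior and $\Lambda$ spans $X'$. Define $L':=\operatorname{cone}(F)\subseteq K'$ and $L:=(L')'$. Since every $\phi\in F$ satisfies $\phi(u)=1$, any nonzero positive combination of elements of $F$ stays nonzero; hence $L'$ is a pointed cone. Since $F$ linearly spans $X'$, $L'$ is full dimensional. Consequently $L$ is a closed, pointed, generating polyhedral cone, and $L'\subseteq K'$ yields $K=(K')'\subseteq(L')'=L$.

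\emph{Transfer to $(X,L)$.} The base $\Sigma_L=\{\phi\in L'\colon\phi(u)=1\}$ equals $\operatorname{conv}(F)$, and its vertex set is the extreme point set $\Lambda_L$. The pivotal observation is that each $\phi\in F\subseteq\Lambda$ is already extreme in the larger compact convex set $\Sigma$, hence in finite dimensions cannot be a convex combination of any other points of $\Sigma$, and a fortiori not of $F\setminus\{\phi\}$; therefore $\Lambda_L=F$, a finite (hence closed) subset of $\overline{\Lambda}$ containing every $W_i^{(j)}$. Applying Lemma~\ref{lemma.bipairs-restricted} with the bisaturated pair $\{M_i^{(1)},M_i^{(2)}\}$, the witnesses $W_i^{(1)},W_i^{(2)}$, and the closed set $F=\Lambda_L$ produces, for every $i$, a bisaturated pair $(N_i^{(1)},N_i^{(2)})$ in $\Lambda_L$ with $N_i^{(j)}:=\Lambda_L\cap\overline{\operatorname{aff}(W_i^{(j)})}$. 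The sandwich $W_i^{(j)}\subseteq N_i^{(j)}\subseteq\overline{\Lambda}\cap\overline{\operatorname{aff}(M_i^{(j)})}=M_i^{(j)}$ forces $\operatorname{aff}(N_i^{(j)})=\operatorname{aff}(M_i^{(j)})$, so by Proposition~\ref{charsatset} the two sets share the same zero set in $X$ and $Z(N_i^{(1)})=Z(M_i^{(1)})=B_i$. Moreover $N_i^{(1)}=\Lambda_L\cap M_i^{(1)}$ is exactly the analogue of $N(B_i)$ inside $\Lambda_L$, so both hypotheses of Theorem~\ref{mainthm} applied to $(X,L)$ are met and $B_i$ is a band in $(X,L)$.

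\emph{Main obstacle.} The delicate step is guaranteeing that every witness $W_i^{(j)}$ survives as a vertex of the sub-polytope $\Sigma_L$, for only then can Lemma~\ref{lemma.bipairs-restricted} be invoked with $F=\Lambda_L$ (as opposed to some proper subset of it); this would fail if we drew the $W_i^{(j)}$ from $\overline{\Lambda}\setminus\Lambda$ or padded $F$ with non-extreme points in the enlargement. Keeping $F\subseteq\Lambda$ throughout is precisely what preserves the extremal structure and, with it, the entire argument.
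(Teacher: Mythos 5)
Your proposal is correct and follows essentially the same route as the paper's proof: encode each $B_i$ by the bisaturated pair $\{N(B_i),N(B_i^d)\}$, use Lemma \ref{lemma.no-bar-needed} to pick finite witness sets inside $\Lambda$ with the right affine hulls, take $L$ dual to the cone they generate, note that these functionals stay extreme in $\Sigma_L$, and transfer the bisaturated pairs via Lemma \ref{lemma.bipairs-restricted}. The only cosmetic differences are that you pad $F$ with extra points of $\Lambda$ to force it to span $X'$ (the paper instead checks that the union of witnesses already affinely contains $\overline{\Lambda}$ via \eqref{M1M2isall}) and that you close with Theorem \ref{mainthm} where the paper uses Proposition \ref{bipairs-disco} together with Lemma \ref{Bd}.
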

\begin{proof}
Due to Proposition \ref{prop2}, $B_i=Z(N(B_i))$ and ${B_i}^d=Z(N({B_i}^d))$ and then according to Proposition \ref{charsatset} the sets $N(B_i)$ and $N({B_i}^d)$ are saturated. By Proposition \ref{bipairs-disco}, it follows that $\{N(B_i),N({B_i}^d)\}$ is a bisaturated pair in $\Lbar$. By definition,
\[\car(B_i)=\Lbar\setminus N(B_i)\mbox{ and }\car({B_i}^d)=\Lbar\setminus N({B_i}^d).\]
Choose, with the aid of Lemma \ref{lemma.no-bar-needed}, finite sets
\[U_i\subseteq\car(B_i)\cap\Lambda\mbox{ and }V_i\subseteq\car({B_i}^d)\cap\Lambda\] 
such that $\aff(U_i)=\aff(\car(B_i))$ and $\aff(V_i)=\aff(\car({B_i}^d))$.
Define
\[F:=\bigcup_{i=1}^N (U_i\cup V_i)\]
and $L:=\{x\in X\colon\, \phi(x)\ge 0\mbox{ for every }\phi\in F\}$.
Since for each $i$, $\{N(B_i),N({B_i}^d)\}$ is a bisaturated pair, we obtain
\begin{align*}
&\Lbar\cap \aff(U_i\cup V_i)\supseteq \Lbar\cap \aff(U_i)=\Lbar\cap \aff(\car(B_i))\\
&\qquad =\Lbar\cap \aff(\Lbar\setminus N(B_i))=N({B_i}^d)
\end{align*}
and, similarly, $\Lbar\cap\aff(U_i\cup V_i)\supseteq N(B_i)$, which implies \[\Lbar=N(B_i)\cup N({B_i}^d)\subseteq \Lbar\cap \aff(U_i\cup V_i),\] 
due to \eqref{M1M2isall}. Hence $\Lbar\subseteq\aff(F)$, so that $L$ is a cone. Also, since $F\subseteq\Lambda$, we have $L\supseteq K$, so that $L$ is generating. Moreover, the order unit $u$ of $(X,K)$ is an interior point of $K$, hence an interior point of $L$ and therefore an order unit in $(X,L)$.

Every element of $F$ is an extreme point of
\[\Sigma_L=\{\phi\in X'\colon\, \phi(x)\ge 0\mbox{ for all }x\in L,\ \phi(u)=1\}.\]
Indeed, if $\phi\in F$, $0<\lambda<1$, and $\phi_1,\phi_2\in \Sigma_L$ are such that $\phi=\lambda\phi_1+(1-\lambda)\phi_2$, then $\phi\in\Lambda$ and $\phi_1,\phi_2\in\Sigma$, so $\phi_1=\phi_2=\phi$, as $\Lambda$ consists of extreme points of $\Sigma$. 
In conclusion, $L$ is a generating polyhedral cone in $X$, $u$ is an order unit in $(X,L)$, and $F$ is the set of extreme points of $\Sigma_L$. 

It remains to show that for each $i$ the subspace $B_i$ is a band in $(X,L)$. For $1\le i\le N$ we have 
\begin{align*}
&U_i\subseteq\car(B_i)=\Lbar\setminus N(B_i),\\
&V_i\subseteq\car({B_i}^d)=\Lbar\setminus N({B_i}^d),
\end{align*}
and, since $\{N({B_i}^d),N(B_i)\}$ is a bisaturated pair in $\Lbar$,
\begin{align*}
&\aff(U_i)=\aff(\car(B_i))=\aff(\Lbar\setminus N(B_i))=\aff N({B_i}^d),\\
&\aff(V_i)=\aff(N(B_i)).
\end{align*}
According to Lemma \ref{lemma.bipairs-restricted}, the saturations in $F$ of the sets $U_i$ and $V_i$
constitute a bisaturated pair in $F$, so that, according to Proposition \ref{bipairs-disco}, $Z(U_i)$ and $Z(V_i)$ are bands in $(X,L)$ and $Z(U_i)^d=Z(V_i)$. 

Finally, 
by Lemma \ref{twobasicprops}(ii),
\[Z(U_i)=Z(\sat(U_i))=Z(\sat(\car(B_i)))=Z(\sat(\Lbar\setminus N(B_i))),\]
hence $Z(U_i)={B_i}^d$ by Lemma \ref{Bd}. Similarly, $Z(V_i)=B_i$. It follows that every $B_i$ is a band in $(X,L)$.
\end{proof}

A combination of Theorem \ref{polybands} and Lemma \ref{plasteringlemma} yields 
Theorem \ref{thm:numbands}. We believe that the upper bound  in Theorem \ref{thm:numbands} is far from optimal. The following example shows that there exist 
$(n+1)$-dimensional partially ordered vector spaces with ${ 2n\choose n} +2$ bands. 
In particular, it follows  that there exist partially ordered vector spaces with dimension $n\geq 4$ that have more bands than an $n$-dimensional Archimedean vector lattice, which has $2^n$ bands.  
\begin{example}\label{nr_bands}
Let $P$ be a polytope in $\mathbb{R}^n$ with vertex set $V=\{v_1,\ldots,v_{2n}\}$ that is in general linear position, i.e., for each affine subspace $U\subset \mathbb{R}^n$ with $\dim U<n$ we have that 
$|V\cap U|\leq \dim U +1$. In other words, there are no 3 points in $V$ on a line, no 4 points in $V$ in a plane, etc. To show existence of such a set $V$, let  $n\geq 1$ and $S_{n-1}$ be the unit sphere in $\mathbb{R}^n$. Choose uniformly at random $v_1,\ldots,v_{2n}$ from $S_{n-1}$. Then the convex hull of $\{v_1,\ldots,v_{2n}\}$ is, with probability 1, a polytope, $P$, with non-empty interior. If necessary apply a  translation, so that $0$ is in the interior of $P$. By construction, the vertex set 
$V= \{v_1,\ldots,v_{2n}\}$ of $P$ is in general linear position.

For $i=1,\ldots,2n$ let $w_i=(v_i,1)\in\mathbb{R}^n\times\mathbb{R}=\mathbb{R}^{n+1}$ and define 
\[
K_V=\{x\in\mathbb{R}^{n+1}\colon \langle w_i,x\rangle\geq 0 \mbox{ for all }i=1,\ldots,2n\}.
\]
Here $\langle\cdot,\cdot\rangle$ denotes the usual inner-product on $\mathbb{R}^{n+1}$. 
As $w_1,\ldots,w_{2n}$ span $\mathbb{R}^{n+1}$, $K_V$ is a closed generating polyhedral cone in $\mathbb{R}^{n+1}$ with $u=(0,1)\in\mathbb{R}^n\times\mathbb{R}$ in its interior.  Now note that 
\[
\Sigma =\{z\in K_V'\colon \langle z,u\rangle =1\} =
\{z\in K_V'\colon z=(v,1)\mbox{ and }v\in P\}
\] 
and $\Lambda =\{w_1,\ldots,w_{2n}\}$, as $V$ is in general linear position.
Moreover, if $U$ is an affine subspace of $\mathbb{R}^{n+1}$ contained in $\Sigma$ with $\dim U<n$, then $|\Lambda\cap U|\le \dim U+1$. 

Next we prove that $(\mathbb{R}^{n+1},K_V)$ has ${2n\choose n}+2$ bands by showing that $S\subseteq\Lambda$ is bisaturated if and only if $S=\emptyset$, $S=\Lambda$, or $|S|=n$. 
Clearly, if $S$ is a bisaturated set in $\Lambda$ and $|S|>n$, then $\dim S >n-1$, as $V$ is in general linear position. So, $S=\Lambda \cap \aff S=\Lambda$.  Likewise, if $S$ is bisaturated in $\Lambda$ and $|S|<n$, then $|\Lambda\setminus S|>n$, and hence $S=\emptyset$.

On the other hand, if $|S|=n$, then $\dim (S)=n-1$ and $\dim (\Lambda\setminus S)=n-1$, as $V$ is in general linear position. This implies that 
\[|\Lambda\cap\aff S|\le \dim (\aff(\Lambda\cap \aff S))+1=\dim(S)+1=n=|S|,\]
and since $\Lambda\cap \aff S\supseteq S$ it follows that $\Lambda\cap\aff S=S$. Similarly $\Lambda\cap \aff(\Lambda\setminus S)=\Lambda\setminus S$. Thus, $\{S,\Lambda\setminus S\}$ is a bisaturated pair in $\Lambda$. It follows that $(\mathbb{R}^{n+1},K_V)$ has ${2n\choose n} +2$ bands. 
\end{example}

We conjecture that the optimal upper bound for the number of bands in an $(n+1)$-dimensional partially ordered vector space $(X,K)$ with closed generating cone $K$, is 
${2n\choose n}+2$. 

If the functionals $\{\phi_1,\ldots,\phi_m\}$ determining a polyhedral cone $K$ in $\mathbb{R}^n$ according to \eqref{f.polycone} are explicitly given, Theorem \ref{mainthm} can be used to list all bands in $(\R^n,K)$. Namely, after determining the linear dependence of $\phi_1,\ldots,\phi_m$, the saturation of every subset $M$ of $\{\phi_1,\ldots,\phi_m\}$ is known, and one can check whether $M$ is bisaturated. 
\begin{example}\label{ex.bands}
Let $X=\RR^4$, define 
\[
\phi_1=\left(\begin{smallmatrix}0\\2\\1\\1\end{smallmatrix}\right),\ 
\phi_2=\left(\begin{smallmatrix}2\\0\\1\\1\end{smallmatrix}\right),\ 
\phi_3=\left(\begin{smallmatrix}1\\1\\0\\2\end{smallmatrix}\right),\ 
\phi_4=\left(\begin{smallmatrix}1\\1\\2\\0\end{smallmatrix}\right),\ 
\phi_5=\left(\begin{smallmatrix}4\\0\\0\\0\end{smallmatrix}\right) 
\]
and consider $K=\{x\in\RR^4\colon \phi_i(x)\ge 0 \mbox{ for all } i\in\{1,\ldots,5\}\}$.
Since $\phi_1$, $\phi_2$, $\phi_3$, $\phi_5$ are linearly independent, 
$K$ 
is a cone in $\RR^4$.
For $u=\frac{1}{4}(1,1,1,1)^T$ we have $\phi_i(u)=1$ for all $i\in\{1,\ldots,5\}$, thus $u$ is an interior point of $K$, and we set  $\Sigma:=\left\{\phi\in K'\colon \phi(u)=1\right\}$. 
A straightforward calculation shows that $\phi_i$ is an extreme point of $\Sigma$ for all $i$, hence 
\[
\Lambda=\{\phi_1,\phi_2,\phi_3,\phi_4,\phi_5\}.
\]
Observe that $\spn\{\phi_1,\phi_2,\phi_3,\phi_4\}$ has dimension 3. Indeed, \[\phi_1+\phi_2-\phi_3-\phi_4=0,\] and $\phi_1,\phi_2,\phi_3$ are linearly independent.  We embed $(\RR^4,K)$ into $(\RR^5,\RR^5_+)$ under $\Phi$. We want to determine all bands in $(\mathbb{R}^4,K)$. Due to Proposition \ref{prop2}  
every band equals $Z(M)$ for some saturated set $M\subseteq \{\phi_1,\ldots,\phi_5\}$. Theorem \ref{mainthm} implies that for a saturated set $M\subseteq\{\phi_1,\ldots,\phi_5\}$ one has that $Z(M)$ is a band if and only if $M$ is bisaturated. We list the sets $N\subseteq\{1,\ldots,5\}$ for which $\{\phi_i\colon\, i\in N\}$ is bisaturated: $\emptyset$, $\{5\}$, $\{1,2\}$, $\{1,3\}$, $\{1,4\}$, $\{2,3\}$, $\{2,4\}$, $\{3,4\}$, $\{1,2,5\}$, $\{1,3,5\}$, $\{1,4,5\}$, $\{2,3,5\}$, $\{2,4,5\}$, $\{3,4,5\}$, $\{1,2,3,4\}$, $\{1,2,3,4,5\}$. 
So, we obtain 16 bands in $(\RR^4,K)$. 
\end{example}

Recall that a band $B$ in  a partially ordered vector space is {\em directed} if for each $x,y\in B$ there exists $z\in B$ such that $x\leq z$ and $y\leq z$. In the examples above the partially ordered vector spaces have  directed and non-directed bands, whereas in vector lattices all bands are directed.  The next example shows that there exists a partially ordered vector space whose non-trivial bands are all non-directed. Here it turns out that the carriers of $\widehat{B}$ in the extension procedures (I) and (II) (see Section 3) differ, in contrast to the situation in Example \ref{exa:namioka}. 
\begin{example}
Let $P$ be the polytope, with vertex set $v_1,\ldots,v_9$, depicted in Figure \ref{fig:1}. 

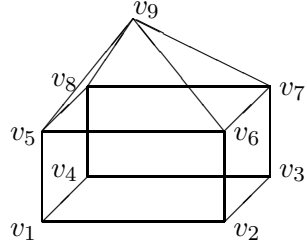
\begin{figure}[h]
\begin{center}
\setlength{\unitlength}{.6cm}
\begin{picture}(5,6)
  \linethickness{0.2mm}
  \put(0,0){\line(1,0){4}}
  \put(0,0){\line(0,1){2}}
  \put(4,0){\line(0,1){2}}
  \put(0,0){\line(1,1){1}}
  \put(1,1){\line(1,0){4}}
  \put(1,1){\line(0,1){2}}
  \put(1,3){\line(1,0){4}}
  \put(5,1){\line(0,1){2}}
  
  \put(0,2){\line(1,0){4}}
  \put(0,2){\line(1,1){1}}
  \put(4,2){\line(1,1){1}}
  \put(4,0){\line(1,1){1}}
  
  \put(0,2){\line(4,5){2}}
  \put(1,3){\line(2,3){1}}
  \put(4,2){\line(-4,5){2}}
  \put(5,3){\line(-2,1){3}}
  
  \put(-.7,-.3){$v_1$}
  \put(4.2,-.3){$v_2$}
  \put(5.2,1){$v_3$}
  \put(.2,1){$v_4$}
  \put(2,4.6){$v_9$}
  \put(-.7, 1.8){$v_5$}
  \put(4.2,1.8){$v_6$}
  \put(5.2,2.8){$v_7$}
  \put(.2,3){$v_8$}

  \thicklines
   \end{picture}
   \end{center}
   \caption{The polytope $P$}\label{fig:1}
   \end{figure}

The space $\mathbb{R}^4$ equipped with the polyhedral cone, \[K_P=\{x\in\mathbb{R}^4\colon \langle (v_i,1),x\rangle\geq 0\mbox{ for }i=1,\ldots, 9\},\] 
has two non-trivial bands corresponding to the bisaturated sets 
\[B_1=\{v_1,v_3,v_5,v_7,v_9\} \mbox{ and }  B_2=\{v_2,v_4,v_6,v_8,v_9\}.\]  
Both these bands are non-directed. 
Also observe that $\car(B_1)=\{v_2,v_4,v_6,v_8\}=\car(\Phi(B_1)^{dd})$ by Remark \ref{rem.indexed}, whereas $\car(\Phi(B_1^d)^d)=\{v_2,v_4,v_6,v_8,v_9\}$ by Theorem \ref{th:proc1}.
\end{example}

It is interesting to analyze the number of directed bands in an $n$-dimen\-sional  partially  ordered vector space $(X,K)$. In particular, it would be nice to know the answers to  following questions. Is the number of directed bands in an $n$-dimensional partially ordered vector space $(X,K)$, with generating closed cone $K$, bounded by $2^n$? If so, is the upper bound attained if and only if $(X,K)$ is an $n$-dimensional Archimedean vector lattice?

\begin{acknowledgement}
The authors thank Werner Timmermann for his helpful questions and comments.

Daniel Kr\"ahmann has given a first example of a space with a polyhedral cone with more bands than in the lattice case. The authors are grateful for that challenge. Marc Landgraf is acknowledged for contributing to the idea of Example \ref{nr_bands}. 

A.\ Kalauch thanks Ulrich Brehm for an inspiring discussion concerning a  geometrical characterization of bisaturated sets in spaces with polyhedral cones, which contributed to the observation in Lemma \ref{lemma.UB}.  
\end{acknowledgement}

\footnotesize

 \vspace{10mm} \noindent
\begin{tabular}{l l l }
Anke Kalauch & Bas Lemmens & Onno van Gaans\\
FR Mathematik & School of Mathematics, &Mathematical Institute\\
Institut f\"ur Analysis & Statistics \& Actuarial Sciences &  Leiden University\\
TU Dresden & University of Kent & P.O. Box 9512\\
01062 Dresden& CT27NF, Canterbury & 2300 RA Leiden\\
Germany& United Kingdom & The Netherlands\\
 & B. Lemmens@kent.ac.uk & vangaans@math.leidenuniv.nl\\

\end{tabular}

\end{document}